%------------------------------------------------------------------------------
% Here please write the date of submission of paper or its revisions:
%------------------------------------------------------------------------------
%
\documentclass[12pt, reqno]{amsart}
\usepackage{amsmath, amsthm, amscd, amsfonts, amssymb, graphicx, color}
\usepackage[bookmarksnumbered, colorlinks, plainpages]{hyperref}
\usepackage{blkarray}
\usepackage{physics}
\usepackage{amsmath}
\usepackage{mathdots}
\hypersetup{colorlinks=true,linkcolor=red, anchorcolor=green, citecolor=cyan, urlcolor=red, filecolor=magenta, pdftoolbar=true}

\textheight 22.5truecm \textwidth 14.5truecm
\setlength{\oddsidemargin}{0.35in}\setlength{\evensidemargin}{0.35in}

\setlength{\topmargin}{-.5cm}
\renewcommand{\phi}{\varphi}
\newtheorem{theorem}{Theorem}[section]
\newtheorem{lemma}[theorem]{Lemma}

\newtheorem{corollary}[theorem]{Corollary}
\theoremstyle{definition}

\newtheorem{example}[theorem]{Example}

\theoremstyle{remark}
\newtheorem{remark}[theorem]{Remark}

\numberwithin{equation}{section}

\begin{document}

\setcounter{page}{1}

\title[ Product of rectangular Toeplitz (Hankel) Matrices.]{ Product of rectangular Toeplitz (Hankel) Matrices.}

\author[S. Bensliman, A. Yagoub and K. Toumache ]{Said Bensliman $^1$ $^{*}$, Ameur Yagoub $^1$, Kamel Toumache $^2$.}

\address{$^{1}$ Laboratoire de math\'ematiques pures et appliqu\'es. Universit\'e de Amar Telidji. Laghouat, 03000. Algeria.}
\email{\textcolor[rgb]{0.00,0.00,0.84}{s.bensliman@lagh-univ.dz}}
\email{\textcolor[rgb]{0.00,0.00,0.84}{a.yagoub@lagh-univ.dz}}
\address{$^{2}$ Laboratory of Mathematics and its Applications (LaMA), University of Medea.}
\email{\textcolor[rgb]{0.00,0.00,0.84}{kamel\_toumache@yahoo.fr}}
%\dedicatory{This paper is dedicated to Professor ABCD}

\subjclass[2010]{ 15B05, 15A30.}

\keywords{Toeplitz matrix, asymmetric Toeplitz matrix, product of  asymmetric  Toeplitz matrices, isometry of asymmetric  Toeplitz matrices.}

\date{Received: xxxxxx, Accepted: zzzzzz.
\newline \indent $^{*}$ Corresponding author: Said BENSLIMAN}
\begin{abstract}In this paper, we study products of asymmetric Toeplitz matrices, we give necessary and sufficient conditions for the product of two asymmetric Toeplitz matrices compatible sizes is asymmetric Toeplitz matrix. We also give some results related to the isometry.

\end{abstract} \maketitle
\section{Introduction.}
The product of matrices is an important technique in numerical calculations. 
Some authors have deduced several properties of asymmetric Toeplitz (or Hankel) matrices based on the operators  known  as asymmetric truncated Toeplitz operators, such as \cite{J,J2,R}. When talking about asymmetric Toeplitz operators, they are introduced by \cite{C,C2}, the characterization of product of  asymmetric truncated Toeplitz operators has been discussed in \cite{YA}. In \cite{L}, Lim and Ye show that every matrix is a product of Toeplitz matrices. More interesting results about Toeplitz and Hankel matrices can be found in \cite{C3,G2,G,GU,H,I,I2,P,Z}. The product of two arbitrary Toeplitz (respectively, Hankel) matrices is not necessarily a Toeplitz (respectively, Hankel) matrix.

The main purpose of this paper is another type of product, which is the product of two asymmetric Toeplitz matrices $A$ and $B$, and we find the conditions for them until the product $AB$ is an asymmetric Toeplitz matrix. Since our goal is to find the conditions for A and B, some assumptions about the coefficients of both $A$ and $B$ are needed. Finally, we characterize isometric asymmetric Toeplitz matrices.

This paper is organized as follows: In Section 2, we present some preliminary knowledge of asymmetric  Toeplitz matrices and the technique to know these matrices. In Section 3, we give several results for the  product of asymmetric Toeplitz matrices. In Section 4, we present some results on isometry of asymmetric  Toeplitz matrices.

\section{ Preliminaries.}
 Let $M_{n\times m}(\mathbb{C)}$ be the set of complex $n \times m$ matrices. In this paper, we will number the rows and columns of $n\times m$ matrices from $0$ to $n-1$ and $0$ to $m-1$, respectively. Thus, we will utilize the following notation, $\lbrace e_{1},e_{2},...,e_{n}\rbrace$, $\lbrace\varepsilon_{1},\varepsilon_{2},...,\varepsilon_{m} \rbrace$, and $\lbrace\zeta_{1},\zeta_{2},...,\zeta_{l}\rbrace$ for the standard basis vectors of $\mathbb{C}^{n}$,  $\mathbb{C}^{m}$, and  $\mathbb{C}^{l}$, respectively. Let $I_{n\times m}=(r_{ij})$ and $J_{n\times m}=(l_{ij})$ be $n\times m $ matrices defined as 
\begin{center}
$r_{ij}=\begin{cases}
1\quad if \quad i=j
\\
0\quad if \quad i\neq j
\end{cases}$
, and 
$l_{ij}=\begin{cases}
1\quad if \quad i+j=m+1
\\
0\quad otherwise
\end{cases}$.
\end{center}
If $n=m$, then $I_{n}=I_{n\times n}$ and $J_{n}=J_{n\times n}$.

Let $S_{n}$ and $P_{n}$ be $n\times n$ matrices defined as
\begin{center}
$S_{n}=\left[ \begin{array}{ccccc}
0 & 0 &  \cdots &  \\
1 & 0 &   &  \\
0 & 1 &   &  \\
\vdots &  &  \ddots  &  \\
0  & \cdots &   & 1 & 0
\end{array} \right], \textit{ and }P_{n}=\left[ \begin{array}{cccccc}
0 & &  \cdots & 0 & 1 \\
\vdots  & &  & 1& 0 \\
 &  & \iddots & \ & \vdots \\
 &  &  &   &  \\
1  &  &  &   & 0 
\end{array} \right]. $
\end{center}
Let $\Delta A=A-S_{n}AS_{m}^{\ast}$ be the displacement of matrix $A\in M_{n\times m}(\mathbb{C)}$.
 Recall that an asymmetric Toeplitz matrix is a matrix of the form 
\begin{center}
$A=\left[\begin{array}{ccccccc}
a_{0}&  \overline{\alpha}_{1} &  \cdots & &  &\overline{\alpha}_{m-1} \\
a_{1} & a_{0} &  & & & \vdots \\
\vdots &\ddots &  & & &  \\
   &  &  & &  &  \\
a_{n-1}&  & &  &  & 
\end{array}\right].$
\end{center}
The matrix $H$ is called an asymmetric Hankel matrix if it has the form  
\begin{center}
$H=\left[\begin{array}{ccccccc}
a_{1}&  a_{2} & & \cdots & a_{m-1}  & a_{0} \\
a_{2} & a_{3} & & & & \overline{\alpha}_{1} \\
\vdots & &  &  & \iddots &  \vdots \\
   &  &  & & &  \\
a_{n-1}&  & & &  & \overline{\alpha}_{n-1}
\end{array}\right].$
\end{center}
While $H$ will be written in the form $H = AP_{m}$, where $A$ is a Toeplitz matrix, note that every Hankel matrix can be obtained from an appropriate Toeplitz matrix. If $n=m$, then $A$ is a Toeplitz matrix and $H$ is a Hankel matrix. From the definition above, it is evident that if $A$ is an asymmetric Toeplitz (respectively, Hankel) matrix, then $A^{T}$ and $A^{\ast}$ are also asymmetric Toeplitz (respectively, Hankel) matrices.

Let $a$ and $\alpha$ be vectors in $\mathbb{C}^{n}$ and $\mathbb{C}^{m}$, respectively, where 
$a=(0,a_{1},a_{2},...,a_{n-1})^{T}$ and $\alpha=(0,\alpha_{1},\alpha_{2},...,\alpha_{m-1})^{T}$. We denote $\hat{a}=(0,\overline{a}_{n-1},\overline{a}_{n-2},...,\overline{a}_{1})^{T}$. And let $A(a,\alpha)$ denote an asymmetric Toeplitz matrix, where
\begin{center} 
$A(a,\alpha)=\left[\begin{array}{ccccccc}
0 &  \overline{\alpha}_{1} &  \cdots &  &\overline{\alpha}_{m-1} \\
a_{1} & 0 &  & & \vdots \\
\vdots &\ddots &  &  &  \\
   &  &  &  &  \\
a_{n-1}&  &  &  & 
\end{array}\right],$
\end{center}
and let $H(a,\alpha)$ denote an asymmetric Hankel matrix, where
\begin{center} 
$H(a,\alpha)=\left[\begin{array}{ccccccc}
a_{1}&  a_{2} & & \cdots & a_{m-1} & 0 \\
a_{2} & a_{3} & & & & \overline{\alpha}_{1} \\
\vdots & &  &  & \iddots & \vdots \\
   &  &  & & &  \\
&  &  & & & \overline{\alpha}_{n-1}
\end{array}\right] . $
\end{center}
Observe that every asymmetric Toeplitz matrix $A$ can be written as 
$$A=A_{0}+a_{0}I_{n\times m},$$ where $A_{0}=A(a,\alpha).$ And every asymmetric Hankel matrix $H$ can be written as 
$$H=H_{0}+a_{0}J_{n\times m},$$
where $H_{0}=H(a,\alpha )$. We have $A^{\ast}(a,\alpha)=A(\alpha,a)$.

In this article, the set of $n\times m$ asymmetric Toeplitz (respectively, Hankel) matrices is denoted by $\mathcal{T}(n,m)$ (respectively, $\mathcal{H}(n,m)$), and the set of $n\times m$ asymmetric Toeplitz (respectively, Hankel) matrices for $a_{0}=0$ is denoted by $\mathcal{T}_{0}(n,m)$ (respectively, $\mathcal{H}_{0}(n,m)$).
\begin{lemma}
Let $A\in\mathcal{T}(n,m)$. Then 
$$A=\sum\limits_{i=0}^{\min(n,m)-1}S_{n}^{i}(\Delta A)S_{m}^{i\ast}.$$
\end{lemma}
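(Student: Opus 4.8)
The plan is to exploit a telescoping structure hidden in the displacement operator rather than to reconstruct $A$ entry by entry. Writing $N=\min(n,m)$ and substituting the definition $\Delta A=A-S_nAS_m^{\ast}$ into the sum, I would first observe that each summand splits as
$$S_n^{i}(\Delta A)S_m^{i\ast}=S_n^{i}AS_m^{i\ast}-S_n^{i+1}AS_m^{(i+1)\ast}.$$
This identity is purely formal and uses no property of $A$; it rests only on the associativity $S_n^{i}\bigl(S_nAS_m^{\ast}\bigr)S_m^{i\ast}=S_n^{i+1}AS_m^{(i+1)\ast}$.

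Next I would sum this identity over $i=0,1,\dots,N-1$. The right-hand side telescopes: every term $S_n^{i}AS_m^{i\ast}$ with $1\le i\le N-1$ occurs once with a plus sign and once with a minus sign, so only the two endpoints survive, giving
$$\sum_{i=0}^{N-1}S_n^{i}(\Delta A)S_m^{i\ast}=S_n^{0}AS_m^{0\ast}-S_n^{N}AS_m^{N\ast}=A-S_n^{N}AS_m^{N\ast},$$
since $S_n^{0}AS_m^{0\ast}=I_nAI_m=A$.

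It then remains to show that the boundary term $S_n^{N}AS_m^{N\ast}$ vanishes, and this is the one place where the value $N=\min(n,m)$ is essential. I would argue by nilpotency of the shift matrices: $S_n$ is nilpotent of index $n$ and $S_m$ of index $m$, so $S_n^{n}=0$ and $S_m^{m}=0$, whence $S_m^{m\ast}=(S_m^{m})^{\ast}=0$. If $n\le m$ then $N=n$ and the left factor $S_n^{N}=S_n^{n}$ already dies; if $m\le n$ then $N=m$ and the right factor $S_m^{N\ast}=S_m^{m\ast}$ dies. In either case $S_n^{N}AS_m^{N\ast}=0$, and the displayed identity collapses to $A$, as claimed.

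I expect the main (and essentially only) subtlety to be this last vanishing step: one must check that $\min(n,m)$ is exactly the smallest exponent guaranteeing that one of the two shift factors is annihilated, which is what pins down the upper limit of the summation. Note that the Toeplitz hypothesis on $A$ is not actually needed for the identity itself, since the telescoping and nilpotency arguments hold for an arbitrary $A\in M_{n\times m}(\mathbb{C})$. If a proof that visibly uses the Toeplitz structure is preferred, one could instead compute $\Delta A$ directly, observe that the Toeplitz relations force all its entries off the first row and first column to cancel, and then verify entrywise that shifting this ``L-shaped'' matrix by $S_n^{i}(\cdot)S_m^{i\ast}$ and summing recovers each diagonal of $A$ exactly once.
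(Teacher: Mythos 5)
Your proof is correct and follows exactly the paper's argument: substitute $\Delta A=A-S_nAS_m^{\ast}$, telescope the sum, and kill the boundary term $S_n^{N}AS_m^{N\ast}$ via nilpotency of the shifts. Your added observation that the Toeplitz hypothesis is never used is accurate, but the route is the same.
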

\begin{proof}
Since $S_{n}^{n}=0$ and $S_{m}^{m\ast}=0$, we have
\begin{align*}
\sum\limits_{i=0}^{\min(n,m)-1}S_{n}^{i}(\Delta A)S_{m}^{i\ast}
&=\sum\limits_{i=0}^{\min(n,m)-1}S_{n}^{i}AS_{m}^{i\ast}-S_{n}^{i+1}AS_{m}^{i+1\ast}
\\
&=A-S_{n}^{\min(n,m)}AS_{m}^{\min(n,m)\ast}=A.
\end{align*}
\end{proof}
For two vectors $x$ and $y$ belong to $\mathbb{C}^{n}$ and $\mathbb{C}^{m}$, respectively. $x\otimes y$ denotes a tensor product of $x$ and $y$, it is defined as 
\begin{center}
$(x\otimes y)z=\langle z,y\rangle x\qquad$
for all $z\in \mathbb{C}^{m},$
\end{center}
where $x\otimes y=x\overline{y}^{T}$, and $\langle z,y\rangle =z_{1}\overline{y}_{1}+z_{1}\overline{y}_{2}+...+z_{m}\overline{y}_{m}.$
It is worth mentioning the following simple lemma, which determines when the
matrix in $ M_{n\times m}(\mathbb{C)}$ is an asymmetric Toeplitz matrix.
\begin{lemma}\label{Lemma 2.1}
Let $A$ be $n\times m$ matrix. Then $A\in\mathcal{T}(n,m)$ if and only if  
$$\Delta A=u\otimes \varepsilon_{0}+e_{0}\otimes v,$$
for some $u\in\mathbb{C}^{n}$ and $v\in\mathbb{C}^{m}$.
\end{lemma}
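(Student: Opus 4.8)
The plan is to work entirely at the level of matrix entries, using the $0$-indexing convention of the paper. First I would compute the displacement $\Delta A = A - S_{n}AS_{m}^{\ast}$ entry by entry. Since $S_{n}$ is the lower shift, $(S_{n})_{ik}=1$ exactly when $k=i-1$, and since $S_{m}^{\ast}$ is the upper shift, $(S_{m}^{\ast})_{lj}=1$ exactly when $l=j-1$. Multiplying these out gives $(S_{n}AS_{m}^{\ast})_{ij}=A_{i-1,j-1}$ whenever $i\ge 1$ and $j\ge 1$, and $0$ whenever $i=0$ or $j=0$. Hence
$$(\Delta A)_{ij}=\begin{cases} A_{ij} & \text{if } i=0 \text{ or } j=0, \\ A_{ij}-A_{i-1,j-1} & \text{if } i\ge 1 \text{ and } j\ge 1. \end{cases}$$

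Next I would identify the right-hand side. Because $x\otimes y = x\overline{y}^{T}$, the matrix $u\otimes \varepsilon_{0}$ has $u$ in its $0$-th column and zeros elsewhere, while $e_{0}\otimes v$ has $\overline{v}^{T}$ in its $0$-th row and zeros elsewhere. Consequently a matrix $M$ can be written as $u\otimes\varepsilon_{0}+e_{0}\otimes v$ for some $u\in\mathbb{C}^{n}$, $v\in\mathbb{C}^{m}$ if and only if $M_{ij}=0$ for all $i\ge 1$ and $j\ge 1$; that is, precisely when $M$ is supported on its first row and first column (the overlap at position $(0,0)$ is harmless, since the entries $u_{0}$ and $\overline{v}_{0}$ can absorb any prescribed value there).

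Combining the two steps, the condition $\Delta A=u\otimes\varepsilon_{0}+e_{0}\otimes v$ becomes equivalent to $(\Delta A)_{ij}=0$ for all $i,j\ge 1$, which by the entry formula means $A_{ij}=A_{i-1,j-1}$ for all $i,j\ge 1$. This last identity is exactly the statement that $A$ is constant along each of its diagonals, i.e.\ that $A$ is an asymmetric Toeplitz matrix. For the forward implication I would take $u$ to be the $0$-th column of $\Delta A$ and read off $v$ from its $0$-th row; for the converse I would simply observe that the Toeplitz (diagonal-constancy) condition forces the lower-right block of $\Delta A$ to vanish, leaving only the first row and column.

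I do not anticipate a genuine obstacle here, as the result is essentially a direct computation. The only point requiring care is the bookkeeping with the $0$-indexing and with the anti-linear convention $x\otimes y=x\overline{y}^{T}$, so that $u\otimes\varepsilon_{0}$ and $e_{0}\otimes v$ are correctly recognized as the first-column- and first-row-supported matrices. Once the entry formula for $\Delta A$ is in hand, both implications follow by inspection.
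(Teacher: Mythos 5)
Your proof is correct and follows essentially the same route as the paper: a direct computation showing that $S_nAS_m^{\ast}$ shifts the entries of $A$ down the diagonals, so that $\Delta A$ is supported on the first row and column exactly when $A$ is constant along diagonals. In fact your version is slightly more complete than the paper's, which only argues the forward implication (Toeplitz $\Rightarrow$ displacement of the stated form) and leaves the converse implicit; your entrywise identity $(\Delta A)_{ij}=A_{ij}-A_{i-1,j-1}$ for $i,j\ge 1$ handles both directions at once.
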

\begin{proof}
Let $A$ be $n\times m$ asymmetric Toeplitz matrix, then the matrix $S_{n}AS_{m}^{\ast}$ is the same as matrix $A$ except for the first row and the first column of the matrix $S_{n}AS_{m}^{\ast}$, which are zero. Thus, $A-S_{n}AS_{m}^{\ast}$ is $n\times m$ matrix all of its coefficients are zero except those in the first row and the first column.
\end{proof}
\begin{remark}
$\Delta (A(a,\alpha))=a\otimes \varepsilon_{0}+e_{0}\otimes \alpha.$
\end{remark}
 Throughout this paper, we assume that $A=A_{0}+a_{0}I_{n\times m}$, and $B=B_{0}+b_{0}I_{m\times l}$, where  $A_{0}=A(a,\alpha)$, and $B_{0}=A(b,\beta)$, for $a=(0,a_{1},a_{2},...,a_{n-1})^{T}$, $\alpha=(0,\alpha_{1},\alpha_{2},...,\alpha_{m-1})^{T}$, $b=(0,b_{1},b_{2},...,b_{m-1})^{T}$, and $\beta=(0,\beta_{1},\beta_{2},...,\beta_{l-1})^{T}$.
\begin{lemma}\label{Lemma 2.3}
Let $A\in\mathcal{T}(n,m)$ and $B\in\mathcal{T}(m,l)$. Then we have 
\begin{enumerate}
\item
$S_{n}A_{0}\varepsilon_{m-1}= \hat{\alpha}_{(m,n)},$ where $\hat{\alpha}_{(m,n)}=(0,\overline{\alpha}_{m-1},...,\overline{\alpha}_{m-n+1})^{T}$ if $n\leq m,$ and $\hat{\alpha}_{(m,n)}=(0,\overline{\alpha}_{m-1},...,\overline{\alpha}_{1},0,a_{1},...,a_{n-m-1})^{T}$ if $m < n.$
\item
$S_{l}B_{0}^{\ast}\varepsilon_{m-1}=\hat{b}_{(m,l)}.$ where $\hat{b}_{(m,l)}=(0,\overline{b}_{m-1},...,\overline{b}_{m-l+1})^{T}$ if $l\leq m,$ and $\hat{b}_{(m,l)}=(0,\overline{b}_{m-1},...,\overline{b}_{1},0,\beta_{1},...,\beta_{l-m-1})^{T}$ if $m < l.$
\item
$A_0\varepsilon_{0}=a.$
\item
$I_{n\times m}\varepsilon_{0}=e_{0},$ and $ I_{l\times m}\varepsilon_{0}=\zeta_{0}.$
\item
$I_{m\times n}\hspace*{1mm} a=a_{(n,m)}^{\sharp}$, where $a_{(n,m)}^{\sharp}=(0,a_{1},a_{2},..,a_{m-1})^{T}$ if $m\leq n,$ and
$a_{(n,m)}^{\sharp}=(0,a_{1},a_{2},...,a_{n-1},0,...,0)^{T}\in\mathbb{C}^{m}$ if $n\leq m.$
\item
If $n\leq m$, then $S_{n}I_{n\times m}\varepsilon_{m-1}=0$. And if $m< n$, then $S_{n}I_{n\times m}\varepsilon_{m-1}=e_{m}$.
\item
If $n\leq m$, then $S_{n}I_{n\times m}=I_{n\times m}S_{m}$. And if $m< n$, then $S_{n}I_{n\times m}=I_{n\times m}S_{m}+e_{m}\otimes \varepsilon_{m-1}$.
\end{enumerate} 
\end{lemma}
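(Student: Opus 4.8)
The plan is to verify each of the seven identities by computing the relevant matrix entries directly and then reading off the designated column (for (1)--(6)) or the full product (for (7)). The only ingredients needed are the explicit entries of the matrices in play: for $A_0=A(a,\alpha)$ one has $(A_0)_{ij}=a_{i-j}$ when $i>j$, $(A_0)_{ij}=\overline{\alpha}_{j-i}$ when $i<j$, and $(A_0)_{ii}=0$; the conjugate transpose $B_0^{*}=A(\beta,b)$ (by the identity $A^{*}(a,\alpha)=A(\alpha,a)$ recorded above) has the analogous entries with $\beta$ on the lower part and $b$ on the upper part; the lower shift acts by $(S_n x)_0=0$ and $(S_n x)_i=x_{i-1}$ for $i\ge 1$; and $(I_{n\times m})_{ij}=\delta_{ij}$. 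With these in hand, items (3) and (4) are immediate, since the first column of $A_0$ is $(0,a_1,\dots,a_{n-1})^{T}=a$ and the first columns of $I_{n\times m}$ and $I_{l\times m}$ are $e_0$ and $\zeta_0$.

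For (1) I would first write down the last column $A_0\varepsilon_{m-1}$, whose $i$-th entry is $(A_0)_{i,m-1}$, and then apply $S_n$. The essential feature is the case split $n\le m$ versus $m<n$, dictated by where the diagonal of $A_0$ meets its last column: when $n\le m$ every available row index $i\le n-1$ satisfies $i\le m-1$, so the column is built entirely from superdiagonal entries $\overline{\alpha}_{m-1-i}$; when $m<n$ the rows $i\ge m$ sit strictly below the diagonal and contribute subdiagonal entries $a_{i-m+1}$, with a single $0$ at the diagonal slot $i=m-1$. Applying $S_n$ then shifts the column down and inserts a leading zero, after which the only real care is identifying the bottom subscript correctly (it becomes $\overline{\alpha}_{m-n+1}$ in the first regime and $a_{n-m-1}$ in the second); the outcome is exactly $\hat{\alpha}_{(m,n)}$. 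Item (2) is handled identically, using $B_0^{*}=A(\beta,b)$ in place of $A_0$ and the triple $(l,m)$ in place of $(n,m)$.

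Item (5) follows by observing that $(I_{m\times n}a)_i=a_i$ precisely when both a row $i$ (so $i\le m-1$) and a column $i$ (so $i\le n-1$) are present; this keeps the first $\min(n,m)$ coordinates of $a$ and pads with zeros to length $m$, yielding $a^{\sharp}_{(n,m)}$ in both the case $m\le n$ and the case $n\le m$. Item (6) is then just the distinguished column of (7): the last column of $I_{n\times m}$ equals $e_{m-1}$ when $m\le n$ and vanishes when $m>n$, and applying $S_n$ gives $0$ throughout the range $n\le m$ (either the column is already zero, or it equals $e_{n-1}$, which $S_n$ annihilates) and $e_m$ when $m<n$.

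The one identity requiring an honest entrywise comparison, and hence the main obstacle, is (7). I would compute $(S_n I_{n\times m})_{ij}=\delta_{i,\,j+1}$, while $(I_{n\times m}S_m)_{ij}=\delta_{i,\,j+1}$ for $i\le m-1$ but $(I_{n\times m}S_m)_{ij}=0$ once $i\ge m$. Subtracting, the two products agree wherever $i\le m-1$, so in the range $n\le m$ they coincide and $S_n I_{n\times m}=I_{n\times m}S_m$. When $m<n$ the discrepancy is supported on the rows $i\ge m$, where $\delta_{i,\,j+1}$ forces the single entry $(i,j)=(m,m-1)$; recognizing this lone $1$ as the rank-one matrix $e_m\otimes\varepsilon_{m-1}$ gives $S_n I_{n\times m}=I_{n\times m}S_m+e_m\otimes\varepsilon_{m-1}$, completing the list.
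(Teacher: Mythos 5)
Your proposal is correct; every case split and every boundary subscript checks out (including the implicit use of $\alpha_0=0$ to absorb the diagonal entry when $n=m$ in item (1)). For items (1)--(6) you are essentially reproducing the paper's argument: the paper writes out the same last-column and first-column computations for (1)--(3), and its proofs of (4)--(6) via the rank-one decomposition $I_{n\times m}=\sum_{i}e_{i}\otimes\varepsilon_{i}$ are just your Kronecker-delta bookkeeping in different notation. The one place you genuinely diverge is item (7). The paper does not compare entries there: it starts from the displacement identity $\Delta(I_{n\times m})=e_{0}\otimes\varepsilon_{0}$, right-multiplies $S_{n}I_{n\times m}S_{m}^{\ast}=I_{n\times m}-e_{0}\otimes\varepsilon_{0}$ by $S_{m}$, uses $S_{m}^{\ast}S_{m}=I_{m}-\varepsilon_{m-1}\otimes\varepsilon_{m-1}$ and $S_{m}^{\ast}\varepsilon_{0}=0$ to arrive at $S_{n}I_{n\times m}=I_{n\times m}S_{m}+\bigl(S_{n}I_{n\times m}\varepsilon_{m-1}\bigr)\otimes\varepsilon_{m-1}$, and then invokes item (6) to evaluate the correction term. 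Your direct computation of $(S_{n}I_{n\times m})_{ij}=\delta_{i,j+1}$ versus $(I_{n\times m}S_{m})_{ij}$ is more elementary and self-contained (it does not route through (6)), while the paper's derivation has the advantage of rehearsing exactly the displacement-operator manipulations that drive Lemmas 2.7 and 3.1 later on; both are complete proofs.
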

\begin{proof}
\begin{enumerate}
\item
If $n\leq m$, we have 
$$A_{0}\varepsilon_{m-1}=\left[\begin{array}{ccccccc}
0 &  \overline{\alpha}_{1} &  \cdots &  &\overline{\alpha}_{m-1} \\
a_{1} & 0 &  & & \vdots \\
\vdots &\ddots &  &  &  \\
   &  &  &  &  \\
a_{n-1}&  &  &  & 
\end{array}\right]\left[\begin{array}{c}
0 \\
0 \\
\vdots  \\
    \\
    \\
1
\end{array}\right] =\left[\begin{array}{c}
\overline{\alpha}_{m-1} \\
 \vdots \\
  \\
    \\
\overline{\alpha}_{m-n}
\end{array}\right],$$
then 
 $S_{n}A_{0}\varepsilon_{m-1}=(0,\overline{\alpha}_{m-1},...,\overline{\alpha}_{m-n+1})^{T}=\hat{\alpha}_{(m,n)}.$ 
 If $m < n$, we have 
$$A_{0}\varepsilon_{m-1}=\left[\begin{array}{ccccccc}
0 &  \overline{\alpha}_{1} &  \cdots &  &\overline{\alpha}_{m-1} \\
a_{1} & 0 &  & & \vdots \\
\vdots &\ddots &  &  &  \\
   &  &  &  &  \\
a_{n-1}&  &  &  & 
\end{array}\right]\left[\begin{array}{c}
0 \\
0 \\
\vdots  \\
    \\
    \\
1
\end{array}\right] =\left[\begin{array}{c}
\overline{\alpha}_{m-1} \\
 \vdots \\
 \overline{\alpha}_{1} \\
  0  \\
  a_{1}\\
  \vdots\\
a_{n-m}
\end{array}\right],$$
then 
 $S_{n}A_{0}\varepsilon_{m-1}=(0,\overline{\alpha}_{m-1},...,\overline{\alpha}_{1},0,a_{1},...,a_{n-m-1})^{T}=\hat{\alpha}_{(m,n)}.$
 \item
The proof is the same as proof (1).
\item
$A_{0}\varepsilon_{0}=\left[\begin{array}{ccccccc}
0 &  \overline{\alpha}_{1} &  \cdots &  &\overline{\alpha}_{m-1} \\
a_{1} & 0 &  & & \vdots \\
\vdots &\ddots &  &  &  \\
   &  &  &  &  \\
a_{n-1}&  &  &  & 
\end{array}\right]\left[\begin{array}{c}
1 \\
0 \\
\vdots  \\
    \\
    \\
0
\end{array}\right] =\left[\begin{array}{c}
0 \\
a_{1}\\
 \vdots \\
  \\
    \\
a_{n-1}
\end{array}\right]=a.$
\item
We have
$$
I_{n\times m}\varepsilon_{0}=\sum\limits_{i=0}^{ \min(n,m)-1}(e_{i}\otimes\varepsilon_{i})\varepsilon_{0}=\sum\limits_{i=0}^{\min(n,m)-1}\langle\varepsilon_{0},\varepsilon_{i}\rangle e_{i}=e_{0},$$
and
 $$ I_{l\times m}\varepsilon_{0}=\sum\limits_{i=0}^{\min(l,m)-1}( \zeta_{i}\otimes\varepsilon_{i})\varepsilon_{0}=\sum\limits_{i=0}^{\min(l,m)-1}\langle\varepsilon_{0},\varepsilon_{i}\rangle \zeta_{i}=\zeta_{0}.$$
\item 
Since $\langle a,e_{0}\rangle =0$, then we have
$$I_{m\times n} \hspace*{1mm}a=\sum\limits_{i=0}^{\min(n,m)-1}(\varepsilon_{i}\otimes e_{i})a=\sum\limits_{i=0}^{\min(n,m)-1} \langle a,e_{i}\rangle \varepsilon_{i}=\sum\limits_{i=1}^{\min(n,m)-1} a_{i}\varepsilon_{i}=a_{(n,m)}^{\sharp}.$$
\item We have
$$
S_{n}I_{n\times m}\varepsilon_{m-1}=S_{n}\sum\limits_{i=0}^{\min(n,m)-1}(e_{i}\otimes\varepsilon_{i})\varepsilon_{m-1}=\sum\limits_{i=0}^{\min(n,m)-1}(e_{i+1}\otimes\varepsilon_{i})\varepsilon_{m-1}=\sum\limits_{i=0}^{\min(n,m)-1}\langle\varepsilon_{m-1},\varepsilon_{i}\rangle e_{i+1}.$$
Thus, $S_{n}I_{n\times m}\varepsilon_{m-1}=0$ if $n\leq m$, and $S_{n}I_{n\times m}\varepsilon_{m-1}=e_{m}$ if $m< n$.
\item
We have 
$$\Delta(I_{n\times m})=I_{n\times m}-S_{n}I_{n\times m}S_{m}^{\ast}=e_{0}\otimes \varepsilon_{0},$$
then 
$$S_{n}I_{n\times m}S_{m}^{\ast}=I_{n\times m}-e_{0}\otimes \varepsilon_{0},$$
since $I_{m}=S_{m}^{\ast}S_{m}+\varepsilon_{m-1}\otimes\varepsilon_{m-1}$ and $S_{m}^{\ast}\varepsilon_{0}=0$, then 
$$S_{n}I_{n\times m}S_{m}^{\ast}S_{m}=I_{n\times m}S_{m},$$
then
$$S_{n}I_{n\times m}(I_{m}-\varepsilon_{m-1}\otimes\varepsilon_{m-1})=I_{n\times m}S_{m},$$
hence
$$S_{n}I_{n\times m}-S_{n}I_{n\times m}\varepsilon_{m-1}\otimes\varepsilon_{m-1}=I_{n\times m}S_{m},$$
we use (6) to complete this proof.  
\end{enumerate}
\end{proof}
To proceed with our characterization, we need several auxiliary lemmas. The first of them is a well-known fact.
\begin{lemma}\label{L1}
Let $x,x'\in\mathbb{C}^{n}$ and $y,y'\in\mathbb{C}^{m}$. If the rank-one matrices $M=x\otimes y$ and $N=x'\otimes y'$ are identical, then one
of the following three cases holds
\begin{enumerate}
\item $x=x'=0$ or $y=y'=0$.
\item $x=y'=0$ or $y=x'=0$.
\item $x= \lambda x' $ and $\overline{\lambda}y=y'$ for some nonzero scalar $\lambda$.
\end{enumerate}
By convention $\frac{1}{\infty}=0$, we can propose that $x\otimes y=x'\otimes y'$, then one of the following two cases holds
\begin{enumerate}
\item[i.] Trivial case: $x=x'=0$ or $y=y'=0$.
\item[ii.] $x= \lambda x' $ and $\overline{\lambda}y=y'$ for some scalar $\lambda\in\hat{\mathbb{C}}=\mathbb{C}\cup\lbrace \infty\rbrace$.
\end{enumerate}
\end{lemma}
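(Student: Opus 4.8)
The plan is to read the matrix identity $M=N$ off entrywise and then split into the case where the common matrix vanishes and the case where it does not. Writing $M=x\overline{y}^{T}$ and $N=x'\overline{y'}^{T}$, the hypothesis $M=N$ says precisely that $x_{i}\overline{y_{j}}=x'_{i}\overline{y'_{j}}$ for all indices $i$ and $j$. First I would dispose of the degenerate situation $M=N=0$: since $x\otimes y=0$ forces $x=0$ or $y=0$ (a nonzero component of each would produce a nonzero entry), and likewise $x'\otimes y'=0$ forces $x'=0$ or $y'=0$, the four resulting combinations $(x=0,x'=0)$, $(y=0,y'=0)$, $(x=0,y'=0)$, $(y=0,x'=0)$ are exactly the alternatives packaged in cases (1) and (2). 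This step is bookkeeping rather than mathematics.

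The substantive case is $M=N\neq 0$, in which all four vectors are automatically nonzero. Here I would extract the scalar $\lambda$ by comparing a single nonzero column. Fixing an index $j$ with $y_{j}\neq 0$ (then necessarily $y'_{j}\neq 0$, since otherwise the $j$-th column of $N$ would vanish while that of $M$ does not), the column identity $x_{k}\overline{y_{j}}=x'_{k}\overline{y'_{j}}$ for every $k$ gives $x=\lambda x'$ with $\lambda=\overline{y'_{j}}/\overline{y_{j}}\neq 0$. Substituting $x=\lambda x'$ back into $x\overline{y}^{T}=x'\overline{y'}^{T}$ and cancelling a nonzero component of $x'$ yields $\lambda\overline{y}=\overline{y'}$, that is $\overline{\lambda}\,y=y'$, which is exactly case (3).

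Finally, for the projective reformulation I would check that the convention $\tfrac{1}{\infty}=0$ merges the degenerate alternatives into the single relation ``$x=\lambda x'$ and $\overline{\lambda}y=y'$'' with $\lambda\in\hat{\mathbb{C}}$: taking $\lambda=0$ reproduces $x=0$ together with $y'=0$, while taking $\lambda=\infty$ (reading $x=\infty\cdot x'$ as $x'=0$ and $\overline{\lambda}y=y'$ as $y=0$) reproduces $x'=0$ together with $y=0$. Thus case (2), together with the mixed part of the zero case, is absorbed into item (ii), leaving only the genuinely non-projective collapse $x=x'=0$ or $y=y'=0$ as the trivial case (i).

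I expect the only real care to lie in this last bookkeeping: making the informal symbols $\lambda=0$ and $\lambda=\infty$ correspond unambiguously to the vanishing of the correct pair among $x,x',y,y'$, and confirming that every degenerate configuration of the first formulation is indeed captured by the extended-scalar statement. The core rank-one argument itself is routine once a nonzero column and a nonzero row of the common matrix are chosen.
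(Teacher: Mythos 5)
Your argument is correct and complete. There is, however, nothing in the paper to compare it against: the lemma is introduced with the remark that it ``is a well-known fact'' and no proof is given, so your entrywise argument actually supplies what the authors omit. All three steps are sound: the zero case splits via ``$x\otimes y=0$ iff $x=0$ or $y=0$'' into exactly the four alternatives of cases (1) and (2); in the nonzero case the scalar $\lambda=\overline{y'_{j}}/\overline{y_{j}}$ extracted from a nonzero column, followed by cancellation of a nonzero component of $x'$, yields $x=\lambda x'$ and $\overline{\lambda}y=y'$; and the bookkeeping identifying $\lambda=0$ with $x=y'=0$ and $\lambda=\infty$ with $x'=y=0$ correctly shows why only the configurations $x=x'=0$ and $y=y'=0$ (where $y,y'$, respectively $x,x'$, need not be proportional) must be retained as the separate trivial case (i) of the projective reformulation.
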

The subsequent lemma serves as a crucial element in our work, enabling the manipulation of the product of asymmetric Toeplitz matrices.
\begin{lemma}\label{Lemma 2.4}
Let $A\in\mathcal{T}(n,m)$ and $B\in\mathcal{T}(m,l)$. Then we have 

$\Delta (AB)=a\otimes\beta-\hat{\alpha}_{(m,n)}\otimes \hat{b}_{(m,l)}+A_{0}b\otimes \zeta_{0}+e_{0}\otimes S_{l}B_{0}^{\ast}S_{m}^{\ast}\alpha +a_{0}b_{0}\Delta (I_{n\times m}I_{m\times l})
+b_{0}\Delta (A_{0}I_{m\times l})+a_{0}\Delta (I_{n\times m}B_{0})
.$
\end{lemma}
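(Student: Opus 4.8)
The plan is to reduce everything to a single displacement computation and then expand with the tensor-product calculus. Writing $A=A_0+a_0 I_{n\times m}$ and $B=B_0+b_0 I_{m\times l}$ and multiplying out, one gets $AB=A_0B_0+a_0 I_{n\times m}B_0+b_0 A_0 I_{m\times l}+a_0b_0 I_{n\times m}I_{m\times l}$. Since $\Delta X=X-S_nXS_l^{\ast}$ is linear, applying $\Delta$ produces exactly the last three terms of the claimed identity, together with $\Delta(A_0B_0)$. Hence it suffices to prove $\Delta(A_0B_0)=a\otimes\beta-\hat{\alpha}_{(m,n)}\otimes\hat{b}_{(m,l)}+A_0b\otimes\zeta_0+e_0\otimes S_lB_0^{\ast}S_m^{\ast}\alpha$, and all the bookkeeping is concentrated there.

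To compute $\Delta(A_0B_0)=A_0B_0-S_nA_0B_0S_l^{\ast}$, the key device is to insert the resolution $I_m=S_m^{\ast}S_m+\varepsilon_{m-1}\otimes\varepsilon_{m-1}$ (already used in the proof of Lemma~\ref{Lemma 2.3}(7)) between $A_0$ and $B_0$, giving $S_nA_0B_0S_l^{\ast}=(S_nA_0S_m^{\ast})(S_mB_0S_l^{\ast})+(S_nA_0\varepsilon_{m-1})\otimes(S_lB_0^{\ast}\varepsilon_{m-1})$. For the second summand, Lemma~\ref{Lemma 2.3}(1)--(2) immediately give $(S_nA_0\varepsilon_{m-1})\otimes(S_lB_0^{\ast}\varepsilon_{m-1})=\hat{\alpha}_{(m,n)}\otimes\hat{b}_{(m,l)}$, where I use the rule $(x\otimes y)M=x\otimes(M^{\ast}y)$ to move $B_0S_l^{\ast}$ across the tensor and produce the factor $S_lB_0^{\ast}\varepsilon_{m-1}$.

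For the first summand I would substitute $S_nA_0S_m^{\ast}=A_0-a\otimes\varepsilon_0-e_0\otimes\alpha$ and $S_mB_0S_l^{\ast}=B_0-b\otimes\zeta_0-\varepsilon_0\otimes\beta$, which are just the Remark after Lemma~\ref{Lemma 2.1} rearranged. Expanding the nine resulting products with the composition rules $(x\otimes y)(z\otimes w)=\langle z,y\rangle\,x\otimes w$, $M(x\otimes y)=(Mx)\otimes y$ and $(x\otimes y)M=x\otimes(M^{\ast}y)$, together with $A_0\varepsilon_0=a$ and $B_0^{\ast}\varepsilon_0=\beta$ (Lemma~\ref{Lemma 2.3}(3) applied to $A_0$ and to $B_0^{\ast}=A(\beta,b)$), most cross terms collapse: the scalars $\langle b,\varepsilon_0\rangle=b_0$ and $\langle\varepsilon_0,\alpha\rangle=\overline{\alpha_0}$ vanish because the first entries of $b$ and $\alpha$ are zero, and the three $a\otimes\beta$ contributions combine to a single $-a\otimes\beta$. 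This leaves $(S_nA_0S_m^{\ast})(S_mB_0S_l^{\ast})=A_0B_0-A_0b\otimes\zeta_0-a\otimes\beta-e_0\otimes B_0^{\ast}\alpha+\langle b,\alpha\rangle\,e_0\otimes\zeta_0$.

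Collecting the two summands and subtracting from $A_0B_0$ yields $\Delta(A_0B_0)=a\otimes\beta-\hat{\alpha}_{(m,n)}\otimes\hat{b}_{(m,l)}+A_0b\otimes\zeta_0+e_0\otimes B_0^{\ast}\alpha-\langle b,\alpha\rangle\,e_0\otimes\zeta_0$. The final and most delicate step is to recognize the last two terms as $e_0\otimes S_lB_0^{\ast}S_m^{\ast}\alpha$: taking adjoints in $S_mB_0S_l^{\ast}=B_0-b\otimes\zeta_0-\varepsilon_0\otimes\beta$ gives $S_lB_0^{\ast}S_m^{\ast}=B_0^{\ast}-\zeta_0\otimes b-\beta\otimes\varepsilon_0$, so that $S_lB_0^{\ast}S_m^{\ast}\alpha=B_0^{\ast}\alpha-\langle\alpha,b\rangle\zeta_0$ (the $\beta$ term drops since $\langle\alpha,\varepsilon_0\rangle=\alpha_0=0$); then $\langle\alpha,b\rangle=\overline{\langle b,\alpha\rangle}$ together with $c\,(e_0\otimes\zeta_0)=e_0\otimes\overline{c}\,\zeta_0$ identifies the two expressions and completes the proof. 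I expect the main obstacle to be exactly this bookkeeping — tracking the signs of the nine tensor products and verifying which inner products vanish — since once the resolution of the identity is inserted, everything else is routine linear algebra with the composition rules above.
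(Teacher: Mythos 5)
Your proposal is correct and follows essentially the same route as the paper: the same four-term expansion of $AB$, the same insertion of $I_{m}=S_{m}^{\ast}S_{m}+\varepsilon_{m-1}\otimes\varepsilon_{m-1}$ into $S_{n}A_{0}B_{0}S_{l}^{\ast}$ to produce the correction $\hat{\alpha}_{(m,n)}\otimes\hat{b}_{(m,l)}$, and the same tensor-product calculus. The only difference is organizational: the paper groups the remaining algebra as $A_{0}\Delta B_{0}+(\Delta A_{0})(S_{m}B_{0}S_{l}^{\ast})$, so the term $e_{0}\otimes S_{l}B_{0}^{\ast}S_{m}^{\ast}\alpha$ appears directly from $(e_{0}\otimes\alpha)(S_{m}B_{0}S_{l}^{\ast})$ and the nine-term expansion (and your final re-identification of $e_{0}\otimes B_{0}^{\ast}\alpha-\langle b,\alpha\rangle\,e_{0}\otimes\zeta_{0}$) is avoided.
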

\begin{proof}
\begin{align}\label{2.0.1}
\Delta(AB)&=\Delta\Big[( A_{0}+a_{0}I_{n\times m})(B_{0}+b_{0}I_{m\times l})\Big] \nonumber \\
&=\Delta (A_{0}B_{0}+a_{0}I_{n\times m}B_{0}+b_{0}A_{0}I_{m\times l}+a_{0}b_{0}I_{n\times m}I_{m\times  l}) \nonumber \\
&=\Delta (A_{0}B_{0})+a_{0}\Delta (I_{n\times m}B_{0})+b_{0}\Delta (A_{0}I_{m\times l})+a_{0}b_{0}\Delta( I_{n\times m}I_{m\times l}),
\end{align}
on the other hand, by Lemma \ref{Lemma 2.3}, we have 
\begin{align}\label{2.0.2}
\Delta (A_{0}B_{0})&=A_{0}B_{0}-S_{n}A_{0}B_{0}S_{l}^{\ast} \nonumber \\
&=A_{0}B_{0}-A_{0}S_{m}B_{0}S_{l}^{\ast}+A_{0}S_{m}B_{0}S_{l}^{\ast}-S_{n}A_{0}B_{0}S_{l}^{\ast}\nonumber \\
&=A_{0}(B_{0}-S_{m}B_{0}S_{l}^{\ast})+A_{0}S_{m}B_{0}S_{l}^{\ast}-S_{n}A_{0}(S_{m}^{\ast}S_{m}+\varepsilon_{m-1}\otimes\varepsilon_{m-1}) B_{0}S_{l}^{\ast} \nonumber \\
&=A_{0}\Delta B_{0}+\Delta A_{0}(S_{m}B_{0}S_{l}^{\ast})-S_{n}A_{0}(\varepsilon_{m-1}\otimes\varepsilon_{m-1}) B_{0}S_{l}^{\ast} \nonumber\\
&=A_{0}(b\otimes \zeta_{0}+\varepsilon_{0}\otimes\beta)+(a\otimes \varepsilon_{0}+e_{0}\otimes\alpha)(S_{m}B_{0}S_{l}^{\ast})-S_{n}A_{0}\varepsilon_{m-1}\otimes S_{l}B_{0}^{\ast}\varepsilon_{m-1} \nonumber\\
&=A_{0}b\otimes \zeta_{0}+A_{0}\varepsilon_{0}\otimes\beta+a\otimes S_{l}B_{0}^{\ast}S_{m}^{\ast}\varepsilon_{0}+e_{0}\otimes S_{l}B_{0}^{\ast}S_{m}^{\ast}\alpha -S_{n}A_{0}\varepsilon_{m-1}\otimes S_{l}B_{0}^{\ast}\varepsilon_{m-1} \nonumber \\
&=A_{0}b\otimes \zeta_{0}+a\otimes\beta+e_{0}\otimes S_{l}B_{0}^{\ast}S_{m}^{\ast}\alpha -\hat{\alpha}_{(m,n)}\otimes \hat{b}_{(m,l)}.
\end{align}
Combining (\ref{2.0.1}) and (\ref{2.0.2}), we get
\begin{align*}
\Delta (AB)&=a\otimes\beta-\hat{\alpha}_{(m,n)}\otimes \hat{b}_{(m,l)}+A_{0}b\otimes \zeta_{0}+e_{0}\otimes S_{l}B_{0}^{\ast}S_{m}^{\ast}\alpha +a_{0}b_{0}\Delta (I_{n\times m}I_{m\times l})\\
&+b_{0}\Delta (A_{0}I_{m\times l})+a_{0}\Delta (I_{n\times m}B_{0}).
\end{align*}
\end{proof}
\section{product of two matrices in $\mathcal{T}(n,m)$. }
In this section, we give several results for the product of asymmetric Toeplitz matrices. Before going into the  main results, we need the following lemma

\begin{lemma}\label{Lemma 2.5}
Let $A\in\mathcal{T}(n,m)$ and $B\in\mathcal{T}(m,l)$. Then we have 
\begin{enumerate}
\item
If $n\leq m$, then $I_{n\times m}B_{0}\in\mathcal{T}(n,l)$. And if $m< n$, then 
$$\Delta (I_{n\times m}B_{0})=b_{(m,n)}^{\sharp}\otimes \zeta_{0}+e_{0}\otimes \beta -e_{m}\otimes\hat{b}_{(m,l)}.$$
\item
If $l\leq m$, then $A_{0}I_{m\times l}\in\mathcal{T}(n,l)$. And if $m< l$, then
$$\Delta (A_{0}I_{m\times l})=a\otimes \zeta_{0}+e_{0}\otimes \alpha_{(m,l)}^{\sharp}-\hat{\alpha}_{(m,n)}\otimes \zeta_{m}.$$ 
\item
If $n\leq m$ or $l \leq m$, then $I_{n\times m}I_{m\times l}\in\mathcal{T}(n,l)$. And if $m< \min(n,l)$, then $$\Delta (I_{n\times m}I_{m\times l})=e_{0}\otimes \zeta_{0}-e_{m}\otimes \zeta_{m}.$$
\end{enumerate}
\end{lemma}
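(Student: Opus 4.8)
The plan is to compute each displacement $\Delta(XY)=XY-S_{n}XYS_{l}^{\ast}$ head-on and then read off membership in $\mathcal{T}(n,l)$ through the criterion of Lemma \ref{Lemma 2.1}. The two algebraic workhorses are the elementary identities $M(x\otimes y)=(Mx)\otimes y$ and $(x\otimes y)M=x\otimes (M^{\ast}y)$, together with the commutation relations between the shifts and the rectangular identities recorded in Lemma \ref{Lemma 2.3}, and the facts $\Delta(I_{n\times m})=e_{0}\otimes \varepsilon_{0}$ and $\Delta(I_{m\times l})=\varepsilon_{0}\otimes \zeta_{0}$ already established inside the proof of Lemma \ref{Lemma 2.3}. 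I will also use the analogue of the Remark, $\Delta B_{0}=b\otimes \zeta_{0}+\varepsilon_{0}\otimes \beta$ and $\Delta A_{0}=a\otimes \varepsilon_{0}+e_{0}\otimes \alpha$.

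For (1) only the left factor $S_{n}I_{n\times m}$ is touched. When $n\leq m$, part (7) gives $S_{n}I_{n\times m}=I_{n\times m}S_{m}$, so $\Delta(I_{n\times m}B_{0})=I_{n\times m}(B_{0}-S_{m}B_{0}S_{l}^{\ast})=I_{n\times m}\Delta B_{0}$; pushing $I_{n\times m}$ inside and using $I_{n\times m}\varepsilon_{0}=e_{0}$ (part (4)) and $I_{n\times m}b=b^{\sharp}_{(m,n)}$ (part (5)) yields $\Delta(I_{n\times m}B_{0})=b^{\sharp}_{(m,n)}\otimes \zeta_{0}+e_{0}\otimes \beta$, which has the shape demanded by Lemma \ref{Lemma 2.1}. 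When $m<n$, part (7) contributes the correction $e_{0}\otimes\varepsilon_{0}$-type term $e_{m}\otimes \varepsilon_{m-1}$; carrying it through $B_{0}S_{l}^{\ast}$ via $(e_{m}\otimes \varepsilon_{m-1})B_{0}S_{l}^{\ast}=e_{m}\otimes(S_{l}B_{0}^{\ast}\varepsilon_{m-1})=e_{m}\otimes \hat{b}_{(m,l)}$ (part (2)) produces exactly the stated extra summand $-e_{m}\otimes \hat{b}_{(m,l)}$.

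For (2) the symmetric difficulty is the right factor $I_{m\times l}S_{l}^{\ast}$. I would obtain the needed relation by adjoining parts (6)–(7): writing $I_{m\times l}S_{l}^{\ast}=(S_{l}I_{l\times m})^{\ast}$ and taking adjoints gives $I_{m\times l}S_{l}^{\ast}=S_{m}^{\ast}I_{m\times l}$ when $l\leq m$, and $I_{m\times l}S_{l}^{\ast}=S_{m}^{\ast}I_{m\times l}+\varepsilon_{m-1}\otimes \zeta_{m}$ when $m<l$, the tensor term arising from $(\zeta_{m}\otimes \varepsilon_{m-1})^{\ast}=\varepsilon_{m-1}\otimes \zeta_{m}$. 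Then $\Delta(A_{0}I_{m\times l})=(\Delta A_{0})I_{m\times l}$ in the first case, with the additional term $-(S_{n}A_{0}\varepsilon_{m-1})\otimes \zeta_{m}=-\hat{\alpha}_{(m,n)}\otimes \zeta_{m}$ (part (1)) in the second; expanding $(\Delta A_{0})I_{m\times l}=(a\otimes \varepsilon_{0}+e_{0}\otimes \alpha)I_{m\times l}=a\otimes \zeta_{0}+e_{0}\otimes \alpha^{\sharp}_{(m,l)}$ by parts (4)–(5) gives both formulas.

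For (3) I would combine both corrections. If $n\leq m$ or $l\leq m$, one of the clean relations applies and collapses the displacement to $I_{n\times m}\Delta(I_{m\times l})=e_{0}\otimes \zeta_{0}$ or $\Delta(I_{n\times m})I_{m\times l}=e_{0}\otimes \zeta_{0}$, proving membership in $\mathcal{T}(n,l)$. The genuine work, and the main obstacle, is the case $m<\min(n,l)$, where I substitute both $I_{m\times l}S_{l}^{\ast}=S_{m}^{\ast}I_{m\times l}+\varepsilon_{m-1}\otimes \zeta_{m}$ and $S_{n}I_{n\times m}=I_{n\times m}S_{m}+e_{m}\otimes \varepsilon_{m-1}$ into $S_{n}I_{n\times m}I_{m\times l}S_{l}^{\ast}$ and must keep careful track of the cross terms. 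Here one checks that $(e_{m}\otimes \varepsilon_{m-1})S_{m}^{\ast}=e_{m}\otimes(S_{m}\varepsilon_{m-1})=0$ since $S_{m}\varepsilon_{m-1}=0$, that $S_{n}I_{n\times m}(\varepsilon_{m-1}\otimes \zeta_{m})=e_{m}\otimes \zeta_{m}$ by part (6), and that $S_{m}S_{m}^{\ast}=I_{m}-\varepsilon_{0}\otimes \varepsilon_{0}$, so that $I_{n\times m}S_{m}S_{m}^{\ast}I_{m\times l}=I_{n\times m}I_{m\times l}-e_{0}\otimes \zeta_{0}$. Assembling these gives $S_{n}I_{n\times m}I_{m\times l}S_{l}^{\ast}=I_{n\times m}I_{m\times l}-e_{0}\otimes \zeta_{0}+e_{m}\otimes \zeta_{m}$, whence $\Delta(I_{n\times m}I_{m\times l})=e_{0}\otimes \zeta_{0}-e_{m}\otimes \zeta_{m}$, as claimed.
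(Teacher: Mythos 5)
Your proof is correct. Parts (1) and (2) follow the paper's argument essentially verbatim: insert $\Delta B_{0}+S_{m}B_{0}S_{l}^{\ast}$ (resp.\ $\Delta A_{0}+S_{n}A_{0}S_{m}^{\ast}$), invoke Lemma \ref{Lemma 2.3}(7) (resp.\ its adjoint form) to kill or identify the commutator term, and evaluate the resulting rank-one pieces with parts (1), (2), (4), (5) of that lemma. The only place you genuinely diverge is part (3): the paper computes $I_{n\times m}I_{m\times l}$ explicitly as $\sum_{i}e_{i}\otimes\zeta_{i}$ --- so that when $n\leq m$ or $l\leq m$ the product actually equals $I_{n\times l}$, slightly more than mere Toeplitz-ness --- and then applies $\Delta$ to that sum directly, whereas you stay inside the displacement calculus and push the two correction terms $e_{m}\otimes\varepsilon_{m-1}$ and $\varepsilon_{m-1}\otimes\zeta_{m}$ through the product. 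Your cross-term bookkeeping ($S_{m}\varepsilon_{m-1}=0$, $S_{m}S_{m}^{\ast}=I_{m}-\varepsilon_{0}\otimes\varepsilon_{0}$, Lemma \ref{Lemma 2.3}(6)) is accurate and yields the same formula; the paper's route is more concrete, while yours has the advantage of being uniform with the method used in parts (1) and (2).
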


\begin{proof}
\begin{enumerate}
\item
We have
\begin{align*}
\Delta (I_{n\times m}B_{0})&=I_{n\times m}B_{0}-S_{n}I_{n\times m}B_{0}S_{l}^{\ast}=I_{n\times m}(\Delta B_{0}+S_{m}B_{0}S_{l}^{\ast})-S_{n}I_{n\times m}B_{0}S_{l}^{\ast}\\
&=I_{n\times m}\Delta B_{0}+(I_{n\times m}S_{m}-S_{n}I_{n\times m})B_{0}S_{l}^{\ast}.
\end{align*}
By Lemma \ref{Lemma 2.3}, if $n\leq m$, then 
$$
\Delta (I_{n\times m}B_{0})=I_{n\times m}\Delta B_{0}=I_{n\times m}(b\otimes\zeta_{0}+\varepsilon_{0}\otimes\beta)=b_{(m,n)}^{\sharp}\otimes \zeta_{0}+e_{0}\otimes \beta,
$$
then by Lemma \ref{Lemma 2.1}, $I_{n\times m}B_{0}\in\mathcal{T}(n,l)$. And if $m< n$, then
$$
\Delta (I_{n\times m}B_{0})=I_{n\times m}\Delta B_{0}-(e_{m}\otimes \varepsilon_{m-1})B_{0}S_{l}^{\ast}=b_{(m,n)}^{\sharp}\otimes \zeta_{0}+e_{0}\otimes \beta -e_{m}\otimes\hat{b}_{(m,l)}.
$$
\item We have
\begin{align*}
\Delta (A_{0}I_{m\times l})&= A_{0}I_{m\times l}-S_{n}A_{0}I_{m\times l}S_{l}^{\ast}=(\Delta A_{0}+S_{n}A_{0}S_{m}^{\ast})I_{m\times l}-S_{n}A_{0}I_{m\times l}S_{l}^{\ast}\\
&=(\Delta A_{0})I_{m\times l}+S_{n}A_{0}(S_{m}^{\ast}I_{m\times l}-I_{m\times l}S_{l}^{\ast})=(\Delta A_{0})I_{m\times l}+S_{n}A_{0}(I_{l\times m}S_{m}-S_{l}I_{l\times m})^{\ast}.
\end{align*}
By Lemma \ref{Lemma 2.3}, if $l\leq m$, then
$$\Delta (A_{0}I_{m\times l})=(\Delta A_{0})I_{m\times l}=(a\otimes \varepsilon_{0}+e_{0}\otimes \alpha)I_{m\times l}=a\otimes \zeta_{0}+e_{0}\otimes \alpha_{(m,l)}^{\sharp},$$
 then by Lemma \ref{Lemma 2.1}, $A_{0}I_{m\times l}\in\mathcal{T}(n,l)$. And if $m< l$, then 
$$\Delta (A_{0}I_{m\times l})=(\Delta A_{0})I_{m\times l}-S_{n}A_{0}(\varepsilon_{m-1}\otimes \zeta_{m})=a\otimes \zeta_{0}+e_{0}\otimes \alpha_{(m,l)}^{\sharp}-\hat{\alpha}_{(m,n)}\otimes \zeta_{m}.$$

\item
If $n\leq m$, we have
\begin{eqnarray*}
I_{n\times m}I_{m\times l}&=&\sum\limits_{i=0}^{n-1}(e_{i}\otimes \varepsilon_{i})\sum\limits_{i=0}^{\min{(m,l)}-1}(\varepsilon_{i}\otimes \zeta_{i})=\sum\limits_{i=0}^{n-1}\sum\limits_{k=0}^{\min{(m,l)}-1}(e_{i}\otimes \varepsilon_{i})(\varepsilon_{k}\otimes \zeta_{k})\\
&=&\sum\limits_{i=0}^{n-1}\sum\limits_{k=0}^{\min{(m,l)}-1}\langle \varepsilon_{k},\varepsilon_{i} \rangle (e_{i}\otimes \zeta_{k})=\sum\limits_{i=0}^{\min{(n,l)}-1}e_{i}\otimes \zeta_{i}=I_{n\times l}\in\mathcal{T}(n,l).
\end{eqnarray*}	
And if $l \leq m$, we have
\begin{eqnarray*}
I_{n\times m}I_{m\times l}&=&\sum\limits_{i=0}^{\min{(n,m)}-1}(e_{i}\otimes \varepsilon_{i})\sum\limits_{i=0}^{l-1}(\varepsilon_{i}\otimes \zeta_{i})=\sum\limits_{i=0}^{\min{(n,m)}-1}\sum\limits_{k=0}^{l-1}(e_{i}\otimes \varepsilon_{i})(\varepsilon_{k}\otimes \zeta_{k})\\
&=&\sum\limits_{i=0}^{\min{(n,m)}-1}\sum\limits_{k=0}^{l-1}\langle \varepsilon_{k},\varepsilon_{i} \rangle (e_{i}\otimes \zeta_{k})=\sum\limits_{i=0}^{\min{(n,l)}-1}e_{i}\otimes \zeta_{i}=I_{n\times l}\in\mathcal{T}(n,l).
\end{eqnarray*}	
And if $m <\min(n,l)$,  we have $I_{n\times m}I_{m\times l}=\sum\limits_{i=0}^{m-1}e_{i}\otimes\zeta_{i}$, then
 \begin{align*}
\Delta (I_{n\times m}I_{m\times l})&=\sum\limits_{i=0}^{m-1}e_{i}\otimes\zeta_{i}-S_{n}\Big(\sum\limits_{i=0}^{m-1}e_{i}\otimes\zeta_{i}\Big)S_{l}^{\ast}=\sum\limits_{i=0}^{m-1}e_{i}\otimes\zeta_{i}-\sum\limits_{i=0}^{m-1}e_{i+1}\otimes \zeta_{i+1}\\
&=\sum\limits_{i=0}^{m-1}e_{i}\otimes\zeta_{i}-\sum\limits_{i=1}^{m}e_{i}\otimes\zeta_{i}=e_{0}\otimes \zeta_{0}-e_{m}\otimes \zeta_{m}.
\end{align*}
\end{enumerate}
\end{proof}

\begin{theorem}\label{coro 3.3}
Let $A\in\mathcal{T}(n,m)(A\neq 0)$ and $B\in\mathcal{T}(m,l)(B\neq 0)$. If$\hspace*{2mm}\max(n,l)\leq m$, then $AB\in\mathcal{T}(n,l)$ if and only if one of the following conditions is met
\begin{enumerate}
\item
$A$ has a form  
\begin{equation}\label{MATRIX A1}
\left[\begin{array}{ccccccc}
a_{0} & \overline{\alpha}_{1} & \cdots  &   \overline{\alpha}_{m-n} & 0 &\cdots & 0\\
0 & \ddots  &   & \\
 \vdots & &  & \\
0 & &   &  
\end{array}\right] 
\end{equation}
or $B$ has a form
\begin{equation}\label{MATRIX B1}
\left[\begin{array}{ccccccc}
b_{0} & 0 & \cdots  &   0 \\
b_{1} & \ddots  &   & \\
 \vdots & &  & \\
b_{m-l} & &   & \\
  0 & &   &  \\
 \vdots & &   &  \\
 0 & &   &  
\end{array}\right].
\end{equation}

\item
$A=A(\dfrac{1}{\lambda} \hat{\alpha}_{(m,n)},\alpha)+a_{0}I_{n\times m}$ and $B=A(b,\overline{\lambda}\hat{b}_{(m,l)})+b_{0}I_{m\times l}$, for some $\lambda\in\hat{\mathbb{C}}$.
\end{enumerate}
\end{theorem}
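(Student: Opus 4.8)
The plan is to test the Toeplitz criterion of Lemma \ref{Lemma 2.1} on the product, namely that $AB\in\mathcal{T}(n,l)$ if and only if $\Delta(AB)=u\otimes\zeta_{0}+e_{0}\otimes v$ for some $u\in\mathbb{C}^{n}$, $v\in\mathbb{C}^{l}$. I will call such matrices \emph{admissible}; they are exactly the matrices supported on the first row and first column, and they form a linear subspace of $M_{n\times l}(\mathbb{C})$. The starting point is the expansion of $\Delta(AB)$ furnished by Lemma \ref{Lemma 2.4}.

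The first step is to absorb every term of that expansion that carries a factor $a_{0}$ or $b_{0}$. Since $\max(n,l)\le m$ gives $n\le m$ and $l\le m$, Lemma \ref{Lemma 2.5} applies in its ``easy'' alternatives: $I_{n\times m}B_{0}$, $A_{0}I_{m\times l}$ and $I_{n\times m}I_{m\times l}=I_{n\times l}$ all lie in $\mathcal{T}(n,l)$, and their displacements $\Delta(I_{n\times m}B_{0})=b^{\sharp}_{(m,n)}\otimes\zeta_{0}+e_{0}\otimes\beta$, $\Delta(A_{0}I_{m\times l})=a\otimes\zeta_{0}+e_{0}\otimes\alpha^{\sharp}_{(m,l)}$ and $\Delta(I_{n\times l})=e_{0}\otimes\zeta_{0}$ are admissible. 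Together with the two manifestly admissible terms $A_{0}b\otimes\zeta_{0}$ and $e_{0}\otimes S_{l}B_{0}^{\ast}S_{m}^{\ast}\alpha$ appearing in Lemma \ref{Lemma 2.4}, this shows
\begin{equation*}
\Delta(AB)=\big(a\otimes\beta-\hat{\alpha}_{(m,n)}\otimes\hat{b}_{(m,l)}\big)+R,
\end{equation*}
where $R$ is admissible. Because admissibility is a linear condition, $AB\in\mathcal{T}(n,l)$ if and only if $D:=a\otimes\beta-\hat{\alpha}_{(m,n)}\otimes\hat{b}_{(m,l)}$ is admissible.

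The key observation is that $D$ is never admissible unless it vanishes. Indeed, the four vectors $a=(0,a_{1},\dots)^{T}$, $\beta=(0,\beta_{1},\dots)^{T}$, $\hat{\alpha}_{(m,n)}=(0,\overline{\alpha}_{m-1},\dots)^{T}$ and $\hat{b}_{(m,l)}=(0,\overline{b}_{m-1},\dots)^{T}$ all have zeroth entry equal to $0$ (here $n\le m$ and $l\le m$ guarantee that $\hat{\alpha}_{(m,n)}$ and $\hat{b}_{(m,l)}$ take the short form of Lemma \ref{Lemma 2.3}, with no $a$- or $\beta$-tail). Hence each rank-one term, and so $D$ itself, has vanishing first row and first column, i.e. $D$ is supported on the interior. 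An admissible matrix is supported on the border, so the only matrix that is both is $0$. Thus $AB\in\mathcal{T}(n,l)$ if and only if
\begin{equation*}
a\otimes\beta=\hat{\alpha}_{(m,n)}\otimes\hat{b}_{(m,l)}.
\end{equation*}

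It remains to solve this rank-one identity and read off the matrices. Applying the convention form of Lemma \ref{L1} with $x=a$, $y=\beta$, $x'=\hat{\alpha}_{(m,n)}$, $y'=\hat{b}_{(m,l)}$ yields exactly two alternatives. The trivial case $a=\hat{\alpha}_{(m,n)}=0$ or $\beta=\hat{b}_{(m,l)}=0$ translates, respectively, into ``the first column of $A_{0}$ vanishes and $\alpha_{m-n+1}=\dots=\alpha_{m-1}=0$'' or ``the first row of $B_{0}$ vanishes and $b_{m-l+1}=\dots=b_{m-1}=0$'', which are precisely the normal forms (\ref{MATRIX A1}) and (\ref{MATRIX B1}) of condition (1). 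The non-trivial case $a=\mu\,\hat{\alpha}_{(m,n)}$ and $\overline{\mu}\,\beta=\hat{b}_{(m,l)}$ becomes, after setting $\lambda=1/\mu$ and using $\tfrac{1}{\infty}=0$, the relations $a=\tfrac{1}{\lambda}\hat{\alpha}_{(m,n)}$ and $\beta=\overline{\lambda}\,\hat{b}_{(m,l)}$, that is, $A=A(\tfrac{1}{\lambda}\hat{\alpha}_{(m,n)},\alpha)+a_{0}I_{n\times m}$ and $B=A(b,\overline{\lambda}\hat{b}_{(m,l)})+b_{0}I_{m\times l}$, which is condition (2); the hypotheses $A\neq0$, $B\neq0$ are what make this dichotomy clean. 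I expect the main obstacle to be purely bookkeeping: checking that each coordinate of the zeroed-out tails corresponds to the indicated band of $\alpha$ (resp.\ $b$) entries, and matching the scalar $\mu$ of Lemma \ref{L1} with the $\lambda$ of the statement.
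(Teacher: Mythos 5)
Your proposal is correct and follows essentially the same route as the paper: expand $\Delta(AB)$ via Lemma \ref{Lemma 2.4}, absorb the $a_{0}$, $b_{0}$ terms using the $n\le m$, $l\le m$ cases of Lemma \ref{Lemma 2.5}, observe that $a\otimes\beta-\hat{\alpha}_{(m,n)}\otimes\hat{b}_{(m,l)}$ has zero first row and column and hence must vanish, and finish with Lemma \ref{L1}. The only difference is the harmless reparametrization $\lambda\mapsto 1/\lambda$ of the scalar in case (2).
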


\begin{proof}
 If $\max (n,l)\leq m$. By Lemma \ref{Lemma 2.4} and Lemma \ref{Lemma 2.5}, we have
 $$\Delta(AB)=a\otimes\beta-\hat{\alpha}_{(m,n)}\otimes \hat{b}_{(m,l)}+\gamma_{1}\otimes \zeta_{0}+e_{0}\otimes \gamma_{2},$$

with $\gamma_{1}=A_{0}b+a_{0}b_{(m,n)}^{\sharp}+ab_{0}+a_{0}b_{0}e_{0},$ and $\gamma_{2}=S_{l}B_{0}^{\ast}S_{m}^{\ast}\alpha +\overline{a}_{0}\beta +\overline{b}_{0}\alpha_{(m,l)}^{\sharp}$.

 Since $a\otimes\beta$ and $\hat{\alpha}_{(m,n)}\otimes \hat{b}_{(m,l)}$ are $n\times l$ matrices, where the first row and the first column are zero, then $AB\in\mathcal{T}(n,l)$ if and only if 
$$a\otimes\beta-\hat{\alpha}_{(m,n)}\otimes \hat{b}_{(m,l)}=0.$$
By Lemma \ref{L1}, we conclude that $AB\in\mathcal{T}(n,l)$ if and only if one of the following conditions is met
\begin{enumerate}
\item $a=\hat{\alpha}_{(m,n)}=0$, then $A$ has a form (\ref{MATRIX A1}). Or $\beta=\hat{b}_{(m,l)}=0$, then $B$ has a form (\ref{MATRIX B1}).
 \item  $\hat{\alpha}_{(m,n)}=\lambda a$ and $\beta=\overline{\lambda}\hat{b}_{(m,l)}$, for some $\lambda\in\hat{\mathbb{C}}$, then
$A=A(\dfrac{1}{\lambda} \hat{\alpha}_{(m,n)},\alpha)+a_{0}I_{n\times m}$, and $B=A(b,\overline{\lambda}\hat{b}_{(m,l)})+b_{0}I_{m\times l}$.
 In this case, one can show that, if $\lambda\in\mathbb{C}^{\ast},$ then 
$$
A=\left[\begin{array}{ccccccc}
a_{0} & \overline{\alpha}_{1} & \cdots  &   \overline{\alpha}_{m-n} &\lambda a_{n-1} &\cdots & \lambda a_{1}\\
a_{1}  & \ddots  &   & \\
 \vdots & &  & \\
 a_{n-1}& &   &  
\end{array}\right],$$

and $$B=\left[\begin{array}{ccccccc}
b_{0} & \lambda b_{m-1} & \cdots  &   \lambda b_{m-l+1} \\
b_{1} & \ddots  &   & \\
 \vdots & &  & \\
b_{m-1} & &   &  
\end{array}\right].$$

If $\lambda =0$, then $\beta=0$ and $\hat{\alpha}_{(m,n)}=0$, this implies that $B=A(b,0)+b_{0}I_{m\times l}$ and
\begin{equation}\label{MATRIX A3}
A =\left[\begin{array}{ccccccc}
a_{0} & \overline{\alpha}_{1} & \cdots  &   \overline{\alpha}_{m-n} & 0 &\cdots & 0\\
a_{1}  & \ddots  &   & \\
 \vdots & &  & \\
 a_{n-1}& &   &  
\end{array}\right]. 
\end{equation}
If $\lambda=\infty$, then $a=0$ and $\hat{b}_{(m,l)}=0$, this implies that $A=A(0,\alpha)+a_{0}I_{n\times m}$ and
 
$$B=\left[\begin{array}{ccccccc}
b_{0} & \overline{\beta}_{1} & \cdots  &\overline{\beta}_{m-1}  \\
b_{1} & \ddots  &   & \\
 \vdots & &  & \\
  b_{m-l} & &  & \\
    0 & &  & \\
 \vdots & &  & \\
 0 & &   & 
\end{array}\right].$$
\end{enumerate}

\end{proof}

\begin{example} The method given in Theorem \ref{coro 3.3} for constructing the asymmetric Toeplitz matrices $A$ and $B$  is illustrated by the following example.

Let $A\in\mathcal{T}(4,5)$ and $B\in\mathcal{T}(5,3)$, where
\begin{center}
$A=\left[\begin{array}{ccccc}
a &  b & \lambda & 2\lambda & 3\lambda  \\
3 &  a & b & \lambda & 2\lambda  \\
2 &  3 & a & b & \lambda  \\
1 &  2 & 3 & a & b  
\end{array}\right] $\qquad
$B=\left[\begin{array}{ccc}
c &  5\lambda & 4\lambda  \\
d &  c & 5\lambda   \\
e &  d & c   \\
4 &  e & d\\
  5 &  4 & e\\ 
\end{array}\right] ,$
\end{center}
for some $a,b,c,d,e,\lambda\in\mathbb{C},$ then we have

 $AB=\left[\begin{array}{ccc}
ac+bd+e\lambda +23\lambda &  bc+\lambda (5a+d+2e+12) & \lambda (4a+5b+c+2d+3e)  \\
3c+ad+eb+14\lambda &  ac+bd+e\lambda +23\lambda & bc+\lambda (5a+d+2e+12)  \\
2c+3d+ea+4b+5\lambda &  3c+ad+eb+14\lambda & ac+bd+e\lambda +23\lambda   \\
c+2d+3e+4a+5b &  2c+3d+ea+4b+5\lambda  & 3c+ad+eb+14\lambda  
\end{array}\right].$ 
It is clear that $AB\in\mathcal{T}(4,3).$
\end{example}

\begin{theorem}\label{coro 3.4}
Let $A\in\mathcal{T}(n,m)(A\neq 0)$ and $B\in\mathcal{T}(m,l)(B\neq 0)$. If$\hspace*{2mm}m<\min (n,l)$, and $km< n\leq (k+1)m $, $k'm< l\leq (k'+1)m $, where $k$ and $k'$ are positive integers, then $AB\in\mathcal{T}(n,l)$ if and only if $A$ has a form
\begin{equation}\label{matrix a}
\left[\begin{array}{ccccccc}
a_{0} & \dfrac{1}{\lambda}a_{m-1} & \cdots  & & &  & \dfrac{1}{\lambda}a_{1} \\
a_{1} & \ddots   & & & & & \\
 \vdots & &  &  & & & \\
a_{m-1}& &  &  & &  & \\
\lambda a_{0}& & &  &  &  & \\
\vdots & &  & & & & \\
\lambda a_{m-1}& & &  &  &  & \\
\vdots & &  & & & & \\
\lambda^{k} a_{0}&   & & & &  & \\
\vdots & &  & & & & \\
\lambda^{k} a_{n-km-1}& & & &  &  & 
\end{array}\right] 
\end{equation}
and $B$ has a form 
{\footnotesize
\begin{equation}\label{matrix b}
\left[\begin{array}{ccccccccccc}
b_{0} &  \overline{\beta}_{1} &  \cdots & \overline{\beta}_{m-1} & \dfrac{1}{\lambda}b_{0} & \cdots & \dfrac{1}{\lambda}\overline{\beta}_{m-1} & \cdots & \dfrac{1}{\lambda^{k'}}b_{0} &  \cdots & \dfrac{1}{\lambda^{k'}}\overline{\beta}_{l-k'm-1} \\ 
\lambda \overline{\beta}_{m-1} & & \ddots & & & & & & &    \\
\vdots & & & & & & & & & & \\
\lambda \overline{\beta}_{1} & & & & & & & & & & 
\end{array}\right].
\end{equation}}
\end{theorem}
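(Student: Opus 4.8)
The plan is to reduce ``$AB$ is Toeplitz'' to one identity of rank-one matrices and then invoke Lemma~\ref{L1}. First I specialize the displacement formula of Lemma~\ref{Lemma 2.4} to the regime $m<\min(n,l)$. Here, in contrast with Theorem~\ref{coro 3.3}, the three identity-displacement terms are no longer supported on the first row and column: by Lemma~\ref{Lemma 2.5} each of $\Delta(I_{n\times m}I_{m\times l})$, $\Delta(A_0I_{m\times l})$, $\Delta(I_{n\times m}B_0)$ carries an extra term seated at index $m$, namely $-e_m\otimes\zeta_m$, $-\hat\alpha_{(m,n)}\otimes\zeta_m$, and $-e_m\otimes\hat b_{(m,l)}$. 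Substituting these into Lemma~\ref{Lemma 2.4} and grouping the summands by support, every term of shape $(\cdot)\otimes\zeta_0$ lies in the first column and every term $e_0\otimes(\cdot)$ lies in the first row, so by Lemma~\ref{Lemma 2.1} they never obstruct the Toeplitz property. Thus $AB\in\mathcal{T}(n,l)$ if and only if the ``interior'' part of $\Delta(AB)$ (entries with row and column index $\geq1$) vanishes, that is
$$a\otimes\beta-\hat\alpha_{(m,n)}\otimes\hat b_{(m,l)}-a_0b_0\,e_m\otimes\zeta_m-b_0\,\hat\alpha_{(m,n)}\otimes\zeta_m-a_0\,e_m\otimes\hat b_{(m,l)}=0.$$

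The key observation is that the last four terms complete to a single rank-one matrix. Put $p:=\hat\alpha_{(m,n)}+a_0e_m$ and $q:=\hat b_{(m,l)}+\overline{b_0}\,\zeta_m$, i.e. insert $a_0$, resp. $\overline{b_0}$, into slot $m$ of $\hat\alpha_{(m,n)}$, resp. $\hat b_{(m,l)}$; bilinearity of $\otimes$ then gives $p\otimes q=\hat\alpha_{(m,n)}\otimes\hat b_{(m,l)}+b_0\,\hat\alpha_{(m,n)}\otimes\zeta_m+a_0\,e_m\otimes\hat b_{(m,l)}+a_0b_0\,e_m\otimes\zeta_m$, so the interior vanishes exactly when $a\otimes\beta=p\otimes q$. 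This is precisely the hypothesis of Lemma~\ref{L1}. In the scalar (non-degenerate) branch it yields $a=\lambda p$ and $\overline\lambda\beta=q$ with $\lambda\in\mathbb{C}^{\ast}$. Reading $a=\lambda p$ slotwise gives $\overline\alpha_j=\tfrac1\lambda a_{m-j}$ for $1\leq j\leq m-1$ (the first row of \eqref{matrix a}) together with the scaled recursion $a_{m+j}=\lambda a_j$ for $0\leq j\leq n-m-1$; unrolling it over the blocks cut out by $km<n\leq(k+1)m$ reconstitutes the first column $(a_0,\dots,a_{m-1},\lambda a_0,\dots,\lambda^{k}a_{n-km-1})^{T}$. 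Symmetrically $\overline\lambda\beta=q$ gives $b_j=\lambda\overline\beta_{m-j}$ for $1\leq j\leq m-1$, together with $\overline\beta_m=\tfrac1\lambda b_0$ and $\overline\beta_{m+j}=\tfrac1\lambda\overline\beta_j$, whose iteration over $k'm<l\leq(k'+1)m$ yields the scaled-periodic first row of \eqref{matrix b}. The converse is immediate, since the forms \eqref{matrix a}, \eqref{matrix b} are built to satisfy $a=\lambda p$ and $\overline\lambda\beta=q$.

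I expect the difficulty to sit in two places. Conceptually, the heart is the completion $p\otimes q$: inserting $a_0$ and $\overline{b_0}$ at slot $m$ is exactly what fuses the four displacement terms into one rank-one matrix, without which Lemma~\ref{L1} cannot be applied. The genuine obstacle, though, is the degenerate branch of Lemma~\ref{L1}, corresponding to the boundary values $\lambda\in\{0,\infty\}$, i.e. $p=0$ or $q=0$. Unlike Theorem~\ref{coro 3.3}, where $\max(n,l)\leq m$ forces these alternatives to reappear as an explicit ``$A$ or $B$'' clause, here $p=0$ does \emph{not} entail $A=0$: with $a_0=0$, $\alpha=0$ and $a_1=\dots=a_{n-m-1}=0$ one has $p=0$ while the tail entries $a_{n-m},\dots,a_{n-1}$ stay free, and pairing this with $\beta=0$ satisfies $a\otimes\beta=p\otimes q$, so $AB$ is Toeplitz even though $A$ is not of the form \eqref{matrix a} (the displayed matrices realize only $\lambda\in\mathbb{C}^{\ast}$). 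Pinning down these boundary solutions—and in particular deciding whether the statement should carry a trivial-case clause as in Theorem~\ref{coro 3.3}—is the step I would scrutinize most; the remaining slotwise bookkeeping that turns $a_{m+j}=\lambda a_j$ and $\overline\beta_{m+j}=\tfrac1\lambda\overline\beta_j$ into the displayed block matrices, with the correct transitions at index $m$ (where $a_0$ and $b_0$ enter) and at the truncated last block, is routine but error-prone.
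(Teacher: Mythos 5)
Your proposal follows essentially the same route as the paper: the same specialization of Lemma~\ref{Lemma 2.4} via Lemma~\ref{Lemma 2.5}, the same completion of the four cross terms into the single rank-one product $(\hat{\alpha}_{(m,n)}+a_{0}e_{m})\otimes(\hat{b}_{(m,l)}+\overline{b}_{0}\zeta_{m})$, and the same application of Lemma~\ref{L1} followed by the slotwise reading-off of \eqref{matrix a} and \eqref{matrix b}. The degenerate branch you flag is real: the paper's proof does treat $\lambda=0$ and $\lambda=\infty$ separately, arriving at the forms \eqref{MATRIX B3} and \eqref{MATRIX A5}, which---exactly as you suspect---are not instances of \eqref{matrix a}--\eqref{matrix b} for any $\lambda\in\mathbb{C}^{\ast}$, so these cases appear in the paper's proof but are omitted from the theorem's statement.
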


\begin{proof}
 If $m <\min(n,l)$. By Lemma \ref{Lemma 2.4} and Lemma \ref{Lemma 2.5}, we have 
$$\Delta(AB)=a\otimes\beta-(\hat{\alpha}_{(m,n)}+a_{0}e_{m})\otimes (\hat{b}_{(m,l)}+\overline{b}_{0}\zeta_{m})+\gamma_{1}\otimes \zeta_{0}+e_{0}\otimes \gamma_{2},$$
with $\gamma_{1}=A_{0}b+a_{0}b_{(m,n)}^{\sharp}+ab_{0}+a_{0}b_{0}e_{0}$, $\gamma_{2}=S_{l}B_{0}^{\ast}S_{m}^{\ast}\alpha +\overline{a}_{0}\beta +\overline{b}_{0}\alpha_{(m,l)}^{\sharp}$, $\hat{\alpha}_{(m,n)}+a_{0}e_{m}=(0,\overline{\alpha}_{m-1},...,\overline{\alpha}_{1},a_{0},a_{1},...,a_{n-m-1})^{T}$, and $\hat{b}_{(m,l)}+\overline{b}_{0}\zeta_{m}=(0,\overline{b}_{m-1},...,\overline{b}_{1},\overline{b}_{0},\beta_{1},...,\beta_{l-m-1})^{T}$.

 Since $a\otimes\beta$ and $(\hat{\alpha}_{(m,n)}+a_{0}e_{m})\otimes (\hat{b}_{(m,l)}+\overline{b}_{0}\zeta_{m})$ are $n\times l$ matrices, where the first row and the first column are zero, then $AB\in\mathcal{T}(n,l)$ if and only if $$a\otimes\beta-(\hat{\alpha}_{(m,n)}+a_{0}e_{m})\otimes (\hat{b}_{(m,l)}+\overline{b}_{0}\zeta_{m})=0$$
It is clear that if $a=\hat{\alpha}_{(m,n)}+a_{0}e_{m}=0$, then $A=0$, and if $\beta =\hat{b}_{(m,l)}+\overline{b}_{0}\zeta_{m} =0$, then $B=0$.

Also, by Lemma \ref{L1}, we conclude that $AB\in\mathcal{T}(n,l)$ if and only if $a=\lambda (\hat{\alpha}_{(m,n)}+a_{0}e_{m})$ and $\hat{b}_{(m,l)}+\overline{b}_{0}\zeta_{m}=\overline{\lambda} \beta$, for some $\lambda\in\hat{\mathbb{C}}$. In this case, if $\lambda\in\mathbb{C}^{\ast}$, and $km< n\leq (k+1)m $, $k'm< l\leq (k'+1)m $, where $k$ and $k'$ are positive integers, then $A$ has a form
(\ref{matrix a}) and $B$ has a form (\ref{matrix b}). And if $\lambda=0$, this implies that $a=0$ and $\hat{b}_{(m,l)}+\overline{b}_{0}\zeta_{m}=0$, then $A=A(0,\alpha)+a_{0}I_{n\times m}$ and 

\begin{equation}\label{MATRIX B3}
B=\left[\begin{array}{ccccccc}
0 & 0 & \cdots  & 0 & \overline{\beta}_{l-m} &\cdots & \overline{\beta}_{l-1} \\
  \vdots & \ddots  &   & \\
  & &  & \\
0 & &   & 
\end{array}\right].
\end{equation}

And if $\lambda=\infty$, this implies that $\beta=0$ and $\hat{\alpha}_{(m,n)}+a_{0}e_{m}=0$, then 
\begin{equation}\label{MATRIX A5}
A=\left[\begin{array}{ccccccc}
0 & 0 & \cdots  & 0 \\
  \vdots & \ddots    & &\\
 0 & & &  \\
a_{n-m} & &   & \\
  \vdots &   &   & \\
a_{n-1} &   &   & 
\end{array}\right],
\end{equation}
 and $B=A(b,0)+b_{0}I_{m\times l}$.

\end{proof}
\begin{theorem}\label{th3}
Let $A\in\mathcal{T}(n,m)(A\neq 0)$ and $B\in\mathcal{T}(m,l)(B\neq 0)$. If$\hspace*{2mm}n\leq m < l$, then $AB\in\mathcal{T}(n,l)$ if and only if one of the following conditions is met
\begin{enumerate}
\item
\begin{equation}\label{MATRIX A2}
A =\left[\begin{array}{ccccccc}
a_{0} & \overline{\alpha}_{1} & \cdots  &   \overline{\alpha}_{m-n} & 0 &\cdots & 0\\
0  & \ddots  &   & \\
 \vdots & &  & \\
 0 & &   &  
\end{array}\right] .
\end{equation}
\item
$A=A(\lambda \hat{\alpha}_{(m,n)},\alpha)+a_{0}I_{n\times m}$ and $B$ has a form (\ref{matrix b}), for some $\lambda\in\hat{\mathbb{C}}$.
\end{enumerate}

\end{theorem}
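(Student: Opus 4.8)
The plan is to reuse, for the regime $n\leq m<l$, the displacement strategy of Theorems \ref{coro 3.3} and \ref{coro 3.4}. First I would feed the appropriate cases of Lemma \ref{Lemma 2.5} into the master identity of Lemma \ref{Lemma 2.4}. Because $n\leq m$, the two factors $I_{n\times m}B_{0}$ and $I_{n\times m}I_{m\times l}$ stay asymmetric Toeplitz, so $\Delta(I_{n\times m}B_{0})$ and $\Delta(I_{n\times m}I_{m\times l})$ only touch the first row and first column. The one term that behaves differently from the $\max(n,l)\leq m$ case of Theorem \ref{coro 3.3} is $\Delta(A_{0}I_{m\times l})$: since $m<l$, Lemma \ref{Lemma 2.5}(2) contributes the extra piece $-\hat{\alpha}_{(m,n)}\otimes\zeta_{m}$. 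Collecting the first-column contributions into $\gamma_{1}$ and the first-row contributions into $\gamma_{2}$ (the same vectors as in Theorems \ref{coro 3.3} and \ref{coro 3.4}) and absorbing $-b_{0}\hat{\alpha}_{(m,n)}\otimes\zeta_{m}$ into the rank-one Hankel-type term, I expect
$$\Delta(AB)=a\otimes\beta-\hat{\alpha}_{(m,n)}\otimes(\hat{b}_{(m,l)}+\overline{b}_{0}\zeta_{m})+\gamma_{1}\otimes\zeta_{0}+e_{0}\otimes\gamma_{2},$$
which is the formula of Theorem \ref{coro 3.4} without its $a_{0}e_{m}$ correction (absent here because $n\leq m$).

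Next I would isolate the obstruction. The summands $\gamma_{1}\otimes\zeta_{0}$ and $e_{0}\otimes\gamma_{2}$ are supported on the first column and the first row, so they are automatically of the form permitted by Lemma \ref{Lemma 2.1}. Moreover the leading entries of $a$, $\beta$, $\hat{\alpha}_{(m,n)}$ and $\hat{b}_{(m,l)}+\overline{b}_{0}\zeta_{m}$ all vanish, so the two rank-one matrices $a\otimes\beta$ and $\hat{\alpha}_{(m,n)}\otimes(\hat{b}_{(m,l)}+\overline{b}_{0}\zeta_{m})$ have zero first row and zero first column. Hence, by Lemma \ref{Lemma 2.1}, $AB\in\mathcal{T}(n,l)$ holds exactly when the interior of their difference vanishes, and since that difference is already zero on the first row and column this is equivalent to the rank-one identity
$$a\otimes\beta=\hat{\alpha}_{(m,n)}\otimes(\hat{b}_{(m,l)}+\overline{b}_{0}\zeta_{m}).$$

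Finally I would classify its solutions with Lemma \ref{L1}, taking $x=a$, $y=\beta$, $x'=\hat{\alpha}_{(m,n)}$, $y'=\hat{b}_{(m,l)}+\overline{b}_{0}\zeta_{m}$. The trivial branch $x=x'=0$ gives $a=\hat{\alpha}_{(m,n)}=0$, i.e. $a_{1}=\cdots=a_{n-1}=0$ and $\alpha_{m-n+1}=\cdots=\alpha_{m-1}=0$, which is precisely the form (\ref{MATRIX A2}) of condition (1). The other trivial branch $y=y'=0$ forces $\beta=0$ together with $\hat{b}_{(m,l)}+\overline{b}_{0}\zeta_{m}=0$; reading off components of the latter gives $b_{0}=b_{1}=\cdots=b_{m-1}=0$ and $\beta_{1}=\cdots=\beta_{l-m-1}=0$, so $B=0$, which is excluded—this collapse is the essential difference from Theorem \ref{coro 3.3}, where the corresponding branch produces a genuine family. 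The remaining branch yields $a=\lambda\hat{\alpha}_{(m,n)}$ and $\hat{b}_{(m,l)}+\overline{b}_{0}\zeta_{m}=\overline{\lambda}\beta$ for some $\lambda\in\hat{\mathbb{C}}$; the first relation gives $A=A(\lambda\hat{\alpha}_{(m,n)},\alpha)+a_{0}I_{n\times m}$, and solving the component recursions $\overline{b}_{m-j}=\overline{\lambda}\beta_{j}$ and $\beta_{m+j}=\tfrac{1}{\overline{\lambda}}\beta_{j}$ coming from the second relation yields the form (\ref{matrix b}) for $B$, which is condition (2). The boundary values $\lambda=0,\infty$ (with $\tfrac{1}{\infty}=0$) are treated as in Theorems \ref{coro 3.3} and \ref{coro 3.4}, giving the degenerate matrices such as (\ref{MATRIX B3}).

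I expect the main difficulty to be the displacement bookkeeping of the first step: one must track which of the three mixed terms stay Toeplitz (the first and third, because $n\leq m$) versus which yields the $\zeta_{m}$ correction (the second, because $m<l$), and then merge that correction with $-\hat{\alpha}_{(m,n)}\otimes\hat{b}_{(m,l)}$ so that the problem collapses to the single rank-one identity above; once that identity is in hand, Lemma \ref{L1} finishes the classification mechanically.
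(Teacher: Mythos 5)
Your proposal is correct and follows essentially the same route as the paper: the same displacement formula $\Delta(AB)=a\otimes\beta-\hat{\alpha}_{(m,n)}\otimes(\hat{b}_{(m,l)}+\overline{b}_{0}\zeta_{m})+\gamma_{1}\otimes\zeta_{0}+e_{0}\otimes\gamma_{2}$ obtained from Lemmas \ref{Lemma 2.4} and \ref{Lemma 2.5}, the same reduction to the rank-one identity, and the same case analysis via Lemma \ref{L1}, including the observation that the branch $\beta=\hat{b}_{(m,l)}+\overline{b}_{0}\zeta_{m}=0$ forces $B=0$ and is excluded.
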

\begin{proof}
If $n \leq m < l$. By Lemma \ref{Lemma 2.4} and Lemma \ref{Lemma 2.5}, we have 
$$\Delta(AB)=a\otimes\beta-\hat{\alpha}_{(m,n)}\otimes (\hat{b}_{(m,l)}+\overline{b}_{0}\zeta_{m})+\gamma_{1}\otimes \zeta_{0}+e_{0}\otimes \gamma_{2},$$
with $\gamma_{1}=A_{0}b+a_{0}b_{(m,n)}^{\sharp}+ab_{0}+a_{0}b_{0}e_{0}$ and $\gamma_{2}=S_{l}B_{0}^{\ast}S_{m}^{\ast}\alpha +\overline{a}_{0}\beta +\overline{b}_{0}\alpha_{(m,l)}^{\sharp}$.

 Since $a\otimes\beta$ and $\hat{\alpha}_{(m,n)}\otimes (\hat{b}_{(m,l)}+\overline{b}_{0}\zeta_{m})$ are $n\times l$ matrices, where the first row and the first column are zero, then $AB\in\mathcal{T}(n,l)$ if and only if 
 $$a\otimes\beta-\hat{\alpha}_{(m,n)}\otimes (\hat{b}_{(m,l)}+\overline{b}_{0}\zeta_{m})=0.$$
It is clear that if $\beta =\hat{b}_{(m,l)}+\overline{b}_{0}\zeta_{m} =0$, then $B=0$. 

Also, by Lemma \ref{L1}, we conclude that $AB\in\mathcal{T}(n,l)$ if and only if one of the following conditions is met
\begin{enumerate}
\item
$a=\hat{\alpha}_{(m,n)}=0$, then $A$ has a form (\ref{MATRIX A2}).
\item
$a=\lambda \hat{\alpha}_{(m,n)}$ and $\hat{b}_{(m,l)}+\overline{b}_{0}\zeta_{m}=\overline{\lambda} \beta$, for some $\lambda\in\hat{\mathbb{C}}$. In this case, if $\lambda\in\mathbb{C}^{\ast}$, and $k'm< l\leq (k'+1)m $, where $k'$ is a positive integer, then $A=A(\lambda \hat{\alpha}_{(m,n)},\alpha)+a_{0}I_{n\times m}$ and $B$ has a form (\ref{matrix b}). And if $\lambda=0$, this implies that $a=0$ and $\hat{b}_{(m,l)}+\overline{b}_{0}\zeta_{m}=0$, then $A=A(0,\alpha)+a_{0}I_{n\times m}$ and $B$ has a form (\ref{MATRIX B3}). And if $\lambda= \infty$, this implies that $\beta=0$ and $\hat{\alpha}_{(m,n)}=0$, then $B=A(b,0)+b_{0}I_{m\times l}$ and $A$ has a form (\ref{MATRIX A3}). 
\end{enumerate}

\end{proof}

\begin{theorem}\label{th4}
Let $A\in\mathcal{T}(n,m)(A\neq 0)$ and $B\in\mathcal{T}(m,l)(B\neq 0)$. If$\hspace*{2mm}l\leq m < n$, then $AB\in\mathcal{T}(n,l)$ if and only if one of the following conditions is met
\begin{enumerate}
\item
\begin{equation}\label{MATRIX B4}
B =\left[\begin{array}{ccccccc}
b_{0} & 0 & \cdots  &  & 0\\
b_{1}  & \ddots  &   & \\
 \vdots & &  & \\
 b_{m-l} & &   &  \\
 0 & &   &  \\
 \vdots & &   & \\
  0 & &   &  
\end{array}\right] .
\end{equation}
\item
$A$ has a form (\ref{matrix a}) and $B=A(b,\dfrac{1}{\overline{\lambda}}\hat{b}_{(m,l)})+b_{0}I_{m\times l}$, for some $\lambda\in\hat{\mathbb{C}}$.
\end{enumerate}

\end{theorem}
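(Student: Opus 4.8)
The plan is to follow the same three-move scheme used in Theorems \ref{coro 3.3}--\ref{th3}: first compute the displacement $\Delta(AB)$ in closed form, then isolate a single rank-one ``interior'' obstruction, and finally resolve that obstruction with the rank-one dichotomy of Lemma \ref{L1}. Concretely, I would start from the general formula of Lemma \ref{Lemma 2.4} and feed in the pieces of Lemma \ref{Lemma 2.5} appropriate to the present regime $l\le m<n$: since $m<n$, part (1) contributes $\Delta(I_{n\times m}B_{0})=b_{(m,n)}^{\sharp}\otimes\zeta_{0}+e_{0}\otimes\beta-e_{m}\otimes\hat{b}_{(m,l)}$, whereas since $l\le m$ parts (2) and (3) are benign, giving $\Delta(A_{0}I_{m\times l})=a\otimes\zeta_{0}+e_{0}\otimes\alpha_{(m,l)}^{\sharp}$ and $I_{n\times m}I_{m\times l}=I_{n\times l}$ with $\Delta(I_{n\times l})=e_{0}\otimes\zeta_{0}$.

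Collecting all terms carrying $\zeta_{0}$ into a first-column vector and all terms carrying $e_{0}$ into a first-row vector, I expect to arrive at
$$\Delta(AB)=a\otimes\beta-(\hat{\alpha}_{(m,n)}+a_{0}e_{m})\otimes\hat{b}_{(m,l)}+\gamma_{1}\otimes\zeta_{0}+e_{0}\otimes\gamma_{2},$$
with $\gamma_{1}=A_{0}b+a_{0}b_{(m,n)}^{\sharp}+ab_{0}+a_{0}b_{0}e_{0}$ and $\gamma_{2}=S_{l}B_{0}^{\ast}S_{m}^{\ast}\alpha+\overline{a}_{0}\beta+\overline{b}_{0}\alpha_{(m,l)}^{\sharp}$, the characteristic feature of this regime being the single correction term $a_{0}e_{m}$ (dual to the $\overline{b}_{0}\zeta_{m}$ appearing in Theorem \ref{th3}). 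The decisive observation, exactly as before, is a disjoint-support argument: $\gamma_{1}\otimes\zeta_{0}+e_{0}\otimes\gamma_{2}$ already has the Toeplitz displacement shape of Lemma \ref{Lemma 2.1}, while both $a\otimes\beta$ and $(\hat{\alpha}_{(m,n)}+a_{0}e_{m})\otimes\hat{b}_{(m,l)}$ vanish identically on the first row and first column, because the leading entries of $a$, $\beta$, $\hat{\alpha}_{(m,n)}+a_{0}e_{m}$ and $\hat{b}_{(m,l)}$ are all zero. Hence these two families of terms have disjoint support, and $AB\in\mathcal{T}(n,l)$ if and only if the interior part vanishes, i.e. if and only if $a\otimes\beta=(\hat{\alpha}_{(m,n)}+a_{0}e_{m})\otimes\hat{b}_{(m,l)}$.

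It then remains to run Lemma \ref{L1} on this rank-one identity. The trivial alternative $a=\hat{\alpha}_{(m,n)}+a_{0}e_{m}=0$ forces $\alpha=0$, $a=0$ and $a_{0}=0$, i.e. $A=0$, which is excluded; the other trivial alternative $\beta=\hat{b}_{(m,l)}=0$ says precisely that $B$ has the form (\ref{MATRIX B4}), giving condition (1). In the proportional alternative $a=\lambda(\hat{\alpha}_{(m,n)}+a_{0}e_{m})$ and $\hat{b}_{(m,l)}=\overline{\lambda}\beta$ with $\lambda\in\mathbb{C}^{\ast}$, reading the identity off component by component yields $\overline{\alpha}_{k}=\frac{1}{\lambda}a_{m-k}$ together with the recursion $a_{j}=\lambda a_{j-m}$ for $m\le j\le n-1$; unwinding the latter into $a_{km+r}=\lambda^{k}a_{r}$ produces exactly the periodic banded shape (\ref{matrix a}) for $A$, while $\beta=\frac{1}{\overline{\lambda}}\hat{b}_{(m,l)}$ gives $B=A(b,\frac{1}{\overline{\lambda}}\hat{b}_{(m,l)})+b_{0}I_{m\times l}$, which is condition (2). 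The boundary values $\lambda=0,\infty$ are dispatched by the $\hat{\mathbb{C}}$-convention exactly as in Theorems \ref{coro 3.4} and \ref{th3}, recovering the degenerate forms (so that, for instance, $\lambda=\infty$ returns $B=A(b,0)+b_{0}I_{m\times l}$). Since every implication in this chain is an equivalence, both directions of the theorem follow at once.

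I expect the only genuinely delicate step to be the bookkeeping that turns the abstract proportionality $a=\lambda(\hat{\alpha}_{(m,n)}+a_{0}e_{m})$ into the explicit form (\ref{matrix a}): one must correctly line up the $m$ reversed entries $\overline{\alpha}_{m-1},\dots,\overline{\alpha}_{1}$ with the tail $a_{0},a_{1},\dots,a_{n-m-1}$ sitting at positions $m,m+1,\dots$, and check that the resulting recursion closes up into the stated $\lambda^{k}$-periodic columns. By contrast, the displacement computation itself and the disjoint-support reduction are routine once Lemmas \ref{Lemma 2.4}, \ref{Lemma 2.5} and \ref{Lemma 2.1} are in hand.
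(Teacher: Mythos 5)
Your proposal is correct and follows essentially the same route as the paper's proof: the displacement formula from Lemmas \ref{Lemma 2.4} and \ref{Lemma 2.5} (with the single correction term $a_{0}e_{m}$ arising from $m<n$), the disjoint-support reduction to $a\otimes\beta=(\hat{\alpha}_{(m,n)}+a_{0}e_{m})\otimes\hat{b}_{(m,l)}$, and the case analysis via Lemma \ref{L1} including the exclusion of $A=0$ and the boundary values $\lambda=0,\infty$. The only difference is that you spell out the recursion $a_{km+r}=\lambda^{k}a_{r}$ leading to form (\ref{matrix a}), which the paper leaves implicit.
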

\begin{proof}
If $l \leq m < n$. By Lemma \ref{Lemma 2.4} and Lemma \ref{Lemma 2.5}, we have 
$$\Delta(AB)=a\otimes\beta-(\hat{\alpha}_{(m,n)}+a_{0}e_{m})\otimes \hat{b}_{(m,l)}+\gamma_{1}\otimes \zeta_{0}+e_{0}\otimes \gamma_{2}.$$
with $\gamma_{1}=A_{0}b+a_{0}b_{(m,n)}^{\sharp}+ab_{0}+a_{0}b_{0}e_{0}$ and $\gamma_{2}=S_{l}B_{0}^{\ast}S_{m}^{\ast}\alpha +\overline{a}_{0}\beta +\overline{b}_{0}\alpha_{(m,l)}^{\sharp}$.

Since $a\otimes\beta$ and $(\hat{\alpha}_{(m,n)}+a_{0}e_{m})\otimes \hat{b}_{(m,l)}$ are $n\times l$ matrices, where the first row and the first column are zero, then $AB\in\mathcal{T}(n,l)$ if and only if 
$$a\otimes\beta-(\hat{\alpha}_{(m,n)}+a_{0}e_{m})\otimes \hat{b}_{(m,l)}=0.$$
It is clear that if $a=\hat{\alpha}_{(m,n)}+a_{0}e_{m}=0$, then $A=0$. 

Also, by Lemma \ref{L1}, we conclude that $AB\in\mathcal{T}(n,l)$ if and only if one of the following conditions is met
\begin{enumerate}
\item
$\beta=\hat{b}_{(m,l)}=0$, then $B$ has a form (\ref{MATRIX B4}).
\item
$a=\lambda (\hat{\alpha}_{(m,n)}+a_{0}e_{m})$ and $\hat{b}_{(m,l)}=\overline{\lambda} \beta$, for some $\lambda\in\hat{\mathbb{C}}$. In this case, if $\lambda\in\mathbb{C}^{\ast}$, and $km< n\leq (k+1)m $, where $k$ is a positive integer, then $A$ has a form
(\ref{matrix a}) and $B=A(b,\dfrac{1}{\overline{\lambda}}\hat{b}_{(m,l)})+b_{0}I_{m\times l}$. And if $\lambda=0$, this implies that $a=0$ and $\hat{b}_{(m,l)}=0$, then $A=A(0,\alpha)+a_{0}I_{n\times m}$ and 
$$B =\left[\begin{array}{cccccc}
b_{0} & \overline{\beta}_{1} & \cdots  &  & \overline{\beta}_{l-1}\\
b_{1}  & \ddots  &   & \\
 \vdots & &  & \\
 b_{m-l} & &   &  \\
 0 & &   &  \\
 \vdots & &   & \\
   0 & &   &  
\end{array}\right].$$

And if $\lambda=\infty $, this implies that $\beta=0$ and $\hat{\alpha}_{(m,n)}+a_{0}e_{m}=0$, then $B=A(b,0)+b_{0}I_{m\times l}$ and $A$ has a form (\ref{MATRIX A5}).
\end{enumerate}

\end{proof}
Using the same arguments of Theorems \ref{coro 3.3}, \ref{coro 3.4}, \ref{th3}, and \ref{th4}, we can check the following consequence.

\begin{corollary}
Let $H_{1}\in\mathcal{H}(n,m)$ and $H_{2}\in\mathcal{H}(m,l)$. Then there are two matrices  $A\in\mathcal{T}(n,m)$ and $B\in\mathcal{T}(m,l)$, such that $AP_{m}=H_{1}$ and $P_{m}B= H_{2}$. Hence, $H_{1}H_{2}\in\mathcal{T}(n,l)$ if and only if $AB\in\mathcal{T}(n,l)$.
\end{corollary}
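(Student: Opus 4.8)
The plan is to exploit the single structural fact that $P_m$ is an involution, namely $P_m^{2}=I_m$, which collapses the product $H_1H_2$ onto $AB$ exactly. From the entrywise description $(P_m)_{ij}=1$ precisely when $i+j=m-1$ one checks directly that $P_m^{2}=I_m$ and $P_m^{\ast}=P_m$, and these are the only properties of $P_m$ I will use.

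First I would produce the two Toeplitz matrices explicitly. Set $A:=H_1P_m$. Then $AP_m=H_1P_m^{2}=H_1$ holds automatically, and $A\in\mathcal{T}(n,m)$ because right multiplication by $P_m$ reverses the columns of the asymmetric Hankel matrix $H_1$ and thus carries its constant anti-diagonals to constant diagonals; this is exactly the correspondence $H=AP_m$ recorded in Section~2. Set $B:=P_mH_2$. Then $P_mB=P_m^{2}H_2=H_2$ for the same reason. To see $B\in\mathcal{T}(m,l)$ I would pass to transposes: since $B^{T}=H_2^{T}P_m$ and $H_2^{T}$ is an asymmetric Hankel matrix (the transpose of a Hankel matrix is Hankel), the same $H=AP$ correspondence gives $B^{T}\in\mathcal{T}(l,m)$, whence $B\in\mathcal{T}(m,l)$ because the transpose of an asymmetric Toeplitz matrix is again asymmetric Toeplitz.

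With $A$ and $B$ in hand, the equivalence is immediate. Using $P_m^{2}=I_m$,
$$H_1H_2=(AP_m)(P_mB)=AP_m^{2}B=AI_mB=AB,$$
so $H_1H_2$ and $AB$ are literally the same $n\times l$ matrix. Consequently $H_1H_2\in\mathcal{T}(n,l)$ if and only if $AB\in\mathcal{T}(n,l)$.

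I do not expect any genuine obstacle beyond bookkeeping. The only point that warrants care is confirming that the \emph{asymmetric} (conjugated) entry pattern is genuinely respected by the maps $H_1\mapsto H_1P_m$ and $H_2\mapsto P_mH_2$, and this is guaranteed by the column/row-reversal description together with the transpose-invariance of the classes $\mathcal{T}$ and $\mathcal{H}$ noted in Section~2. Once the equality $H_1H_2=AB$ is established, the explicit criteria for $H_1H_2\in\mathcal{T}(n,l)$ can be read off directly from Theorems~\ref{coro 3.3}, \ref{coro 3.4}, \ref{th3}, and \ref{th4} applied to $A$ and $B$, which is precisely the content signalled by the sentence preceding the statement.
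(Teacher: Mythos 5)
Your proposal is correct and follows essentially the same route as the paper: the entire argument rests on $P_m^{2}=I_m$ giving $H_1H_2=(AP_m)(P_mB)=AB$, which is exactly the paper's one-line proof. The only difference is that you explicitly construct $A=H_1P_m$ and $B=P_mH_2$ and verify they are asymmetric Toeplitz, a detail the paper leaves implicit via the $H=AP_m$ correspondence stated in Section~2.
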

\begin{proof} Since $P_{m}^{2}=I_{m}$, we have 
$$H_{1}H_{2}=AP_{m}P_{m}B=AP_{m}^{2}B=AB,$$
this ends the proof.
\end{proof}
\begin{corollary}
Let $H\in \mathcal{H}(n,m)$ such that $H=P_{n}A$, where $A\in\mathcal{T}(n,m)$, and let $B\in\mathcal{T}(m,l)$. Then $HB\in\mathcal{H}(n,l)$ if and only if $AB\in\mathcal{T}(n,l)$.
\end{corollary}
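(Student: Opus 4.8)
The plan is to exploit the fact that left multiplication by the reversal matrix $P_n$ is an involution that interchanges asymmetric Toeplitz and asymmetric Hankel structure, exactly as right multiplication by $P_m$ was used in the definition $H=AP_m$ and in the preceding corollary. No displacement computation or appeal to the product lemmas of Section 3 should be needed; the whole statement ought to collapse to one algebraic identity.

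First I would record the two elementary ingredients. One is $P_n^2=I_n$, which is immediate since $P_n$ is the anti-diagonal identity. The other is the structural equivalence: for any $n\times l$ matrix $M$ one has $M\in\mathcal{H}(n,l)$ if and only if $P_n M\in\mathcal{T}(n,l)$. The point is that $(P_n M)_{ij}=M_{\,n-1-i,\,j}$, so left multiplication by $P_n$ reverses the order of the rows; this carries a matrix whose entries depend only on $i+j$ (constant anti-diagonals, i.e.\ Hankel) to one whose entries depend only on $j-i$ (constant diagonals, i.e.\ Toeplitz), and conversely. Because $P_n^2=I_n$, the map $M\mapsto P_n M$ is its own inverse, hence a bijection between $\mathcal{H}(n,l)$ and $\mathcal{T}(n,l)$. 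This is precisely the left-hand analogue of the right-multiplication fact $H=AP_m$ already used in the text.

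With this in hand the corollary is immediate. From the hypothesis $H=P_n A$ together with $P_n^2=I_n$ I get $A=P_n H$, and therefore
$$P_n(HB)=P_n P_n AB=AB.$$
Applying the structural equivalence to $M=HB$ then gives $HB\in\mathcal{H}(n,l)$ if and only if $P_n(HB)=AB\in\mathcal{T}(n,l)$, which is exactly the assertion.

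The one point requiring genuine care is not the computation but the bookkeeping in the structural equivalence: I must confirm that $M\mapsto P_n M$ matches the authors' specific ``asymmetric'' normalizations (the placement of the distinguished diagonal entry $a_0$ and the conjugation convention $\overline{\alpha}_i$) rather than merely the underlying band pattern. This reduces to checking that $P_n$ maps all of $\mathcal{T}(n,l)$ onto all of $\mathcal{H}(n,l)$, which is routine since each of these sets coincides with the full space of $n\times l$ Toeplitz, respectively Hankel, matrices (the parameters $a_i$ and $\overline{\alpha}_i$ range freely over $\mathbb{C}$). Once that identification is settled, the result follows from $P_n(HB)=AB$ and $P_n^2=I_n$ alone.
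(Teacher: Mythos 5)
Your proof is correct and follows essentially the same route as the paper: both rest on the identity $HB=P_nAB$ and the fact that left multiplication by the involution $P_n$ interchanges Toeplitz and Hankel structure. If anything, yours is slightly more complete, since the paper's proof only writes out the ``if'' direction and leaves the converse (via $AB=P_n(HB)$) implicit.
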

\begin{proof} Suppose that $AB\in\mathcal{T}(n,l)$. We have $HB=P_{n}AB$, thus $HB\in\mathcal{H}(n,l)$. 

\end{proof}

\section{ isometry of asymmetric Toeplitz (Hankel) matrix}
In this section, we deal with the isometric asymmetric Toeplitz matrices.
\begin{lemma}\label{lemma12}
Let $A\in\mathcal{T}(n,m)$. 
\begin{enumerate}
\item If $m\leq n$, then
\begin{align*}
\Delta(A^{\ast}A-I_{m})&=\alpha\otimes\alpha-\hat{a}_{(n,m)}\otimes \hat{a}_{(n,m)}+\Big(A_{0}^{\ast}a+\overline{a}_{0}a_{(n,m)}^{\sharp}+a_{0}\alpha+\dfrac{\vert a_{0}\vert^{2}-\Vert a \Vert^{2} -1}{2}\varepsilon_{0}\Big)\otimes \varepsilon_{0}\\
&+\varepsilon_{0}\otimes \Big(A_{0}^{\ast}a+\overline{a}_{0}a_{(n,m)}^{\sharp}+a_{0}\alpha+\dfrac{\vert a_{0}\vert^{2}-\Vert a \Vert^{2} -1}{2}\varepsilon_{0}\Big),
\end{align*}
where $\Vert a \Vert^{2}=\langle a,a\rangle =\sum\limits_{i=1}^{n-1}\vert a_{i} \vert^{2}$.
\item If $n< m$, then
\begin{align*}
\Delta(A^{\ast}A-I_{m})&=\alpha\otimes\alpha-(\hat{a}_{(n,m)}+\overline{a}_{0}\varepsilon_{n})\otimes (\hat{a}_{(n,m)}+\overline{a}_{0}\varepsilon_{n})+\Big(A_{0}^{\ast}a+\overline{a}_{0}a_{(n,m)}^{\sharp}+a_{0}\alpha\\
 &+\dfrac{\vert a_{0}\vert^{2}-\Vert a \Vert^{2} -1}{2}\varepsilon_{0}\Big)\otimes \varepsilon_{0}
+\varepsilon_{0}\otimes \Big(A_{0}^{\ast}a+\overline{a}_{0}a_{(n,m)}^{\sharp}+a_{0}\alpha+\dfrac{\vert a_{0}\vert^{2}-\Vert a \Vert^{2} -1}{2}\varepsilon_{0}\Big).
\end{align*}
\end{enumerate}
\end{lemma}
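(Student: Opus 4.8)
The plan is to reduce everything to Lemma~\ref{Lemma 2.4} applied to the product $A^{\ast}\cdot A$, followed by the elementary identity $\Delta(I_{m})=\varepsilon_{0}\otimes\varepsilon_{0}$ (which follows from $S_{m}S_{m}^{\ast}=I_{m}-\varepsilon_{0}\otimes\varepsilon_{0}$). Writing $A=A_{0}+a_{0}I_{n\times m}$ with $A_{0}=A(a,\alpha)$, its adjoint is $A^{\ast}=A_{0}^{\ast}+\overline{a}_{0}I_{m\times n}$ with $A_{0}^{\ast}=A(\alpha,a)\in\mathcal{T}(m,n)$. Hence $A^{\ast}\in\mathcal{T}(m,n)$ and $A\in\mathcal{T}(n,m)$, and Lemma~\ref{Lemma 2.4} applies after the relabeling $(n,m,l)\mapsto(m,n,m)$: the column data are $\alpha$ (for $A^{\ast}$) and $a$ (for $A$), the row data are $a$ and $\alpha$, and the two scalars are $\overline{a}_{0}$ and $a_{0}$. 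I would substitute these into the seven terms of Lemma~\ref{Lemma 2.4} and then subtract $\varepsilon_{0}\otimes\varepsilon_{0}$.

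Two terms are immediate. The leading tensor $a\otimes\beta$ becomes $\alpha\otimes\alpha$, and $-\hat{\alpha}_{(m,n)}\otimes\hat{b}_{(m,l)}$ becomes $-\hat{a}_{(n,m)}\otimes\hat{a}_{(n,m)}$, since after relabeling both $\hat{\alpha}_{(m,n)}$ and $\hat{b}_{(m,l)}$ equal $S_{m}A_{0}^{\ast}\varepsilon_{n-1}=\hat{a}_{(n,m)}$ by Lemma~\ref{Lemma 2.3}. The case split is inherited from Lemma~\ref{Lemma 2.5}, whose hypotheses collapse under the substitution to the single comparison of the outer dimension $m$ with the shared dimension $n$. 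When $m\le n$ the correction terms are themselves Toeplitz (no $e_{m}/\zeta_{m}$ appears), giving case (1). When $n<m$ these terms each contribute a vector $\varepsilon_{n}$, and the three pieces $-\overline{a}_{0}\,\varepsilon_{n}\otimes\hat{a}_{(n,m)}$, $-a_{0}\,\hat{a}_{(n,m)}\otimes\varepsilon_{n}$ and $-\vert a_{0}\vert^{2}\,\varepsilon_{n}\otimes\varepsilon_{n}$ complete the square into $-(\hat{a}_{(n,m)}+\overline{a}_{0}\varepsilon_{n})\otimes(\hat{a}_{(n,m)}+\overline{a}_{0}\varepsilon_{n})$, giving case (2); here one uses conjugate-linearity of $\otimes$ in its second slot to match the scalars.

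It remains to collect the tensors supported on $\varepsilon_{0}$. Rather than evaluate $S_{m}A_{0}^{\ast}S_{n}^{\ast}\alpha$ and the $\varepsilon_{0}$-slots of the correction terms by hand, I would exploit that $A^{\ast}A-I_{m}$ is Hermitian, hence so is $\Delta(A^{\ast}A-I_{m})$; since $\alpha\otimes\alpha$ and the square term are already Hermitian and $(x\otimes\varepsilon_{0})^{\ast}=\varepsilon_{0}\otimes x$, the remaining part must have the form $w\otimes\varepsilon_{0}+\varepsilon_{0}\otimes w$. The non-$\varepsilon_{0}$ components of $w$ are read off from the $\otimes\zeta_{0}$ contributions of Lemma~\ref{Lemma 2.4}, which after substitution assemble into the relabeled $A_{0}^{\ast}a+\overline{a}_{0}a_{(n,m)}^{\sharp}+a_{0}\alpha$, while the $\varepsilon_{0}$-component is pinned down by matching the $(0,0)$ entry: since $(\Delta X)_{00}=X_{00}$ and $(A^{\ast}A-I_{m})_{00}=\vert a_{0}\vert^{2}+\Vert a\Vert^{2}-1$, whereas $(A_{0}^{\ast}a)_{0}=\Vert a\Vert^{2}$ and the other contributions to $w_{0}$ vanish, symmetry forces the $\varepsilon_{0}$-coefficient to be $\tfrac{\vert a_{0}\vert^{2}-\Vert a\Vert^{2}-1}{2}$. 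This reproduces exactly the stated $w$ in both cases.

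The main obstacle is the bookkeeping of the relabeling in the first step—keeping track of the conjugations, the scalar $\overline{a}_{0}$, and the identities $\hat{\alpha}_{(m,n)}\mapsto\hat{a}_{(n,m)}$ and $b_{(m,n)}^{\sharp}\mapsto a_{(n,m)}^{\sharp}$—together with verifying, in the case $n<m$, that the three $\varepsilon_{n}$-contributions assemble into a perfect square. The Hermiticity-plus-diagonal argument is what makes the delicate scalar $\tfrac{1}{2}(\vert a_{0}\vert^{2}-\Vert a\Vert^{2}-1)$ emerge without a direct evaluation of $S_{m}A_{0}^{\ast}S_{n}^{\ast}a$; performing that evaluation instead would be the more laborious alternative.
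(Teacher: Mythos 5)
Your proposal is correct and follows essentially the same route as the paper: apply Lemma~\ref{Lemma 2.4} (with the case split inherited from Lemma~\ref{Lemma 2.5}) to the product $A^{\ast}A$ after the relabeling $(n,m,l)\mapsto(m,n,m)$, and subtract $\Delta(I_{m})=\varepsilon_{0}\otimes\varepsilon_{0}$. The only real divergence is in the last step: where you invoke Hermiticity of $\Delta(A^{\ast}A-I_{m})$ and match the $(0,0)$ entry to produce the symmetric form and the coefficient $\tfrac{1}{2}(\vert a_{0}\vert^{2}-\Vert a\Vert^{2}-1)$, the paper instead evaluates the term $S_{m}A_{0}^{\ast}S_{n}^{\ast}a$ directly, using $\Delta A_{0}^{\ast}=\alpha\otimes e_{0}+\varepsilon_{0}\otimes a$ together with $(\alpha\otimes e_{0})a=\langle a,e_{0}\rangle\alpha=0$ to get $S_{m}A_{0}^{\ast}S_{n}^{\ast}a=A_{0}^{\ast}a-\Vert a\Vert^{2}\varepsilon_{0}$, and then splits the resulting $(\vert a_{0}\vert^{2}-\Vert a\Vert^{2}-1)\varepsilon_{0}\otimes\varepsilon_{0}$ evenly between the two slots. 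Your symmetry argument is a legitimate (and arguably cleaner) substitute for that computation, which turns out to be a one-line evaluation anyway; both yield the same $w$.
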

\begin{proof}
\begin{enumerate}
\item
If $m\leq n$. By Lemma \ref{Lemma 2.4} and Lemma \ref{Lemma 2.5}, we have
\begin{align}\label{eq:01}
\Delta(A^{\ast}A-I_{m})&=\alpha\otimes\alpha-\hat{a}_{(n,m)}\otimes \hat{a}_{(n,m)}+(A_{0}^{\ast}a+\overline{a}_{0}a_{(n,m)}^{\sharp}+a_{0}\alpha+\vert a_{0}\vert^{2} \varepsilon_{0})\otimes \varepsilon_{0}\nonumber\\
&+\varepsilon_{0}\otimes (S_{m}A_{0}^{\ast}S_{n}^{\ast}a +\overline{a}_{0}a_{(n,m)}^{\sharp}+ a_{0}\alpha)-\varepsilon_{0}\otimes\varepsilon_{0}.
\end{align}
On the other hand, we have
$$\Delta A_{0}^{\ast}=A_{0}^{\ast}-S_{m}A_{0}^{\ast}S_{n}^{\ast}=\alpha\otimes e_{0}+\varepsilon_{0}\otimes a,$$
then 
$$S_{m}A_{0}^{\ast}S_{n}^{\ast}=A_{0}^{\ast}-\alpha\otimes e_{0}-\varepsilon_{0}\otimes a,$$
since $(\alpha \otimes e_{0})a=0$, then $$S_{m}A_{0}^{\ast}S_{m}^{\ast}a=A_{0}^{\ast}a-\Vert a \Vert^{2}\varepsilon_{0},$$
 which, when substituted into (\ref{eq:01}), gives us 
\begin{align*}
\Delta(A^{\ast}A-I_{m})&=\alpha\otimes\alpha-\hat{a}_{(n,m)}\otimes \hat{a}_{(n,m)}+(A_{0}^{\ast}a+\overline{a}_{0}a_{(n,m)}^{\sharp}+a_{0}\alpha)\otimes \varepsilon_{0}\\
&+\varepsilon_{0}\otimes (A_{0}^{\ast}a+\overline{a}_{0}a_{(n,m)}^{\sharp}+a_{0}\alpha )
+(\vert a_{0}\vert^{2}-\Vert a\Vert^{2} -1)\varepsilon_{0}\otimes\varepsilon_{0},
\end{align*}
then
\begin{align*}
\Delta(A^{\ast}A-I_{m})&=\alpha\otimes\alpha-\hat{a}_{(n,m)}\otimes \hat{a}_{(n,m)}+\Big(A_{0}^{\ast}a+\overline{a}_{0}a_{(n,m)}^{\sharp}+a_{0}\alpha+\dfrac{\vert a_{0}\vert^{2}-\Vert a \Vert^{2} -1}{2}\varepsilon_{0}\Big)\otimes \varepsilon_{0}\\
&+\varepsilon_{0}\otimes \Big(A_{0}^{\ast}a+\overline{a}_{0}a_{(n,m)}^{\sharp}+a_{0}\alpha+\dfrac{\vert a_{0}\vert^{2}-\Vert a \Vert^{2} -1}{2}\varepsilon_{0}\Big).
\end{align*}
\item
The proof is the same as proof (1).
\end{enumerate}

\end{proof}

\begin{theorem}
Let $A\in\mathcal{T}(n,m)(A\neq 0)$. If $m\leq n$, then $A$ is an isometric matrix if and only if
 $A_{0}^{\ast}a+\overline{a}_{0}a_{(n,m)}^{\sharp}+a_{0}\alpha+\dfrac{\vert a_{0}\vert^{2}-\Vert a \Vert^{2} -1}{2}\varepsilon_{0}=0$ and one of the following two cases holds
\begin{enumerate}
\item
$A$ has a form (\ref{MATRIX B1}).
\item
$A=A(a,\lambda\hat{a}_{(m,n)})+a_{0}I_{n\times m}$ for some $\lambda\in\mathbb{C}$, such that $\vert \lambda\vert =1$.
\end{enumerate}
\end{theorem}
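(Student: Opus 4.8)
The plan is to characterise the isometry condition $A^{\ast}A=I_{m}$ through the displacement operator and then read off the structure from Lemma~\ref{lemma12} and Lemma~\ref{L1}.

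First I would reduce the isometry condition to a displacement equation. Since $m\leq n$, the matrix $A$ is an isometry exactly when $A^{\ast}A=I_{m}$, i.e. when $A^{\ast}A-I_{m}=0$. For this it is convenient to record that \emph{any} $M\in M_{m\times m}(\mathbb{C})$ satisfies $M=0$ as soon as $\Delta M=0$: the identity $M=S_{m}MS_{m}^{\ast}$ iterates to $M=S_{m}^{m}M(S_{m}^{\ast})^{m}=0$ because $S_{m}^{m}=0$, and this needs no Toeplitz hypothesis on $M$. Applying this to $M=A^{\ast}A-I_{m}$, we see that $A$ is an isometry if and only if $\Delta(A^{\ast}A-I_{m})=0$.

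Next I would feed in Lemma~\ref{lemma12}(1), which (since $m\leq n$) writes $\Delta(A^{\ast}A-I_{m})=\alpha\otimes\alpha-\hat{a}_{(n,m)}\otimes\hat{a}_{(n,m)}+w\otimes\varepsilon_{0}+\varepsilon_{0}\otimes w$, where $w=A_{0}^{\ast}a+\overline{a}_{0}a_{(n,m)}^{\sharp}+a_{0}\alpha+\tfrac{1}{2}(|a_{0}|^{2}-\Vert a\Vert^{2}-1)\varepsilon_{0}$. The key structural remark is that the two halves have disjoint supports: because $\alpha$ and $\hat{a}_{(n,m)}$ have vanishing $0$-th coordinate, the ``core'' $\alpha\otimes\alpha-\hat{a}_{(n,m)}\otimes\hat{a}_{(n,m)}$ is supported on entries $(i,j)$ with $i,j\geq1$, while $w\otimes\varepsilon_{0}+\varepsilon_{0}\otimes w$ lives only on the first row and the first column. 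Hence $\Delta(A^{\ast}A-I_{m})=0$ if and only if each half vanishes separately.

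The vanishing of $w\otimes\varepsilon_{0}+\varepsilon_{0}\otimes w$ forces $w_{i}=0$ for every $i\geq1$ (the off-diagonal entries of the first row and column) together with $w_{0}+\overline{w_{0}}=0$ at the $(0,0)$ slot. This is the one place where a genuine computation is required and where I expect the main subtlety: one must evaluate $w_{0}$ directly and observe that $w_{0}=\tfrac{1}{2}(\Vert a\Vert^{2}+|a_{0}|^{2}-1)$ is \emph{real}, using $(A_{0}^{\ast}a)_{0}=\Vert a\Vert^{2}$, $(a_{(n,m)}^{\sharp})_{0}=0$ and $\alpha_{0}=0$. Since $w_{0}$ is real, the relation $w_{0}+\overline{w_{0}}=0$ gives $w_{0}=0$, so the border term vanishes exactly when $w=0$; this is precisely the first displayed condition of the theorem.

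Finally I would analyse the core equation $\alpha\otimes\alpha=\hat{a}_{(n,m)}\otimes\hat{a}_{(n,m)}$ by Lemma~\ref{L1}. Comparing diagonal entries gives $|\alpha_{i}|=|(\hat{a}_{(n,m)})_{i}|$, so $\alpha=0$ if and only if $\hat{a}_{(n,m)}=0$; this discards the degenerate $\lambda\in\{0,\infty\}$ branches and leaves two cases. Either $\alpha=\hat{a}_{(n,m)}=0$, which means $\alpha=0$ and $a_{n-m+1}=\dots=a_{n-1}=0$, i.e. $A$ has the form~(\ref{MATRIX B1}); or $\alpha=\lambda\hat{a}_{(n,m)}$ with $\lambda\in\mathbb{C}^{\ast}$, in which case $\alpha\otimes\alpha=|\lambda|^{2}\hat{a}_{(n,m)}\otimes\hat{a}_{(n,m)}$ forces $|\lambda|=1$ and $A=A(a,\lambda\hat{a}_{(n,m)})+a_{0}I_{n\times m}$. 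The converse is immediate: either form yields $\alpha\otimes\alpha=\hat{a}_{(n,m)}\otimes\hat{a}_{(n,m)}$ (using $|\lambda|=1$ in the second case), and combined with $w=0$ this gives $\Delta(A^{\ast}A-I_{m})=0$, hence $A^{\ast}A=I_{m}$.
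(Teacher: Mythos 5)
Your proposal is correct and follows essentially the same route as the paper: reduce isometry to $\Delta(A^{\ast}A-I_{m})=0$, invoke Lemma \ref{lemma12}, split the displacement into the ``core'' $\alpha\otimes\alpha-\hat{a}_{(n,m)}\otimes\hat{a}_{(n,m)}$ and the first-row/first-column border, and finish with Lemma \ref{L1}. You are in fact slightly more careful than the paper at one point: the $(0,0)$ entry of the border term only yields $w_{0}+\overline{w_{0}}=0$, and your verification that $w_{0}=\tfrac{1}{2}(\Vert a\Vert^{2}+\vert a_{0}\vert^{2}-1)$ is real (so that the border vanishes exactly when $w=0$) fills in a step the paper asserts without comment.
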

\begin{proof}
If $m\leq n$. $A$ is an isometric matrix if and only if $\Delta (A^{\ast}A-I_{m})=0$.

By Lemma \ref{lemma12}, and since $\alpha\otimes\alpha$ and $\hat{a}_{(n,m)}\otimes \hat{a}_{(n,m)}$ are $m\times m$ matrices, where the first row and the first column are zero, then $\Delta (A^{\ast}A-I_{m})=0$ if and only if
\begin{equation}\label{eq.Isometry}
\alpha\otimes\alpha-\hat{a}_{(n,m)}\otimes \hat{a}_{(n,m)}=0
\end{equation}
and
\begin{center}
$A_{0}^{\ast}a+\overline{a}_{0}a_{(n,m)}^{\sharp}+a_{0}\alpha+\dfrac{\vert a_{0}\vert^{2}-\Vert a \Vert^{2} -1}{2}\varepsilon_{0}=0.$
\end{center}
By Lemma \ref{L1}, the equation (\ref{eq.Isometry}) is satisfied if and only if one of the following two cases holds
\begin{enumerate}
\item $a=\hat{\alpha}_{(m,n)}=0$, this implies that $A$ has a form (\ref{MATRIX B1}).
\item $\alpha=\lambda \hat{a}_{(m,n)}$ and $\hat{a}_{(m,n)}=\overline{\lambda} \alpha $, for some $\lambda \in\mathbb{C}$, then $\alpha=\lambda (\overline{\lambda} \alpha )=\vert\lambda\vert^{2}\alpha$, which implies that $\vert\lambda\vert =1$.
\end{enumerate}
Hence, we complete the proof.
\end{proof}
\begin{theorem}
Let $A\in\mathcal{T}(n,m)(A\neq 0)$. If $n<m$, then $A$ is an isometric matrix if and only if $A$ has a form (\ref{matrix b}) with $\vert \lambda\vert =1$ and $A_{0}^{\ast}a+\overline{a}_{0}a_{(n,m)}^{\sharp}+a_{0}\alpha+\dfrac{\vert a_{0}\vert^{2}-\Vert a \Vert^{2} -1}{2}\varepsilon_{0}=0$.
\end{theorem}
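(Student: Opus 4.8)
The plan is to reproduce the argument of the preceding theorem verbatim, substituting the displacement formula of Lemma~\ref{lemma12}(2) (the case $n<m$) for that of Lemma~\ref{lemma12}(1). First I would record the same reduction already used above: $A$ is an isometry precisely when $A^{\ast}A=I_{m}$, equivalently when $\Delta(A^{\ast}A-I_{m})=0$. Feeding in Lemma~\ref{lemma12}(2) then displays $\Delta(A^{\ast}A-I_{m})$ as a sum of the two rank-one terms $\alpha\otimes\alpha$ and $(\hat{a}_{(n,m)}+\overline{a}_{0}\varepsilon_{n})\otimes(\hat{a}_{(n,m)}+\overline{a}_{0}\varepsilon_{n})$ together with two boundary terms of the shape $(\cdots)\otimes\varepsilon_{0}$ and $\varepsilon_{0}\otimes(\cdots)$.

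The separation step is the same as before. The vectors $\alpha$ and $\hat{a}_{(n,m)}+\overline{a}_{0}\varepsilon_{n}$ both have vanishing first coordinate, so the two tensor terms have zero first row and first column, while the boundary terms live entirely in the first row and column. Hence $\Delta(A^{\ast}A-I_{m})=0$ decouples into the vector equation $A_{0}^{\ast}a+\overline{a}_{0}a_{(n,m)}^{\sharp}+a_{0}\alpha+\frac{|a_{0}|^{2}-\Vert a\Vert^{2}-1}{2}\varepsilon_{0}=0$, which I would carry over unchanged into the statement, and the rank-one identity
$$\alpha\otimes\alpha=(\hat{a}_{(n,m)}+\overline{a}_{0}\varepsilon_{n})\otimes(\hat{a}_{(n,m)}+\overline{a}_{0}\varepsilon_{n}).$$

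To this identity I would apply Lemma~\ref{L1}. The trivial alternative forces $\alpha=0$ and $\hat{a}_{(n,m)}+\overline{a}_{0}\varepsilon_{n}=0$; since the second vector lists $\overline{a}_{n-1},\dots,\overline{a}_{1}$ followed by $\overline{a}_{0}$ in coordinate $n$, its vanishing gives $a=0$ and $a_{0}=0$, so $A=0$, which the hypothesis excludes. This is precisely why, in contrast with the case $m\le n$, only a single form can appear. The surviving alternative is $\alpha=\lambda(\hat{a}_{(n,m)}+\overline{a}_{0}\varepsilon_{n})$ and $\hat{a}_{(n,m)}+\overline{a}_{0}\varepsilon_{n}=\overline{\lambda}\alpha$ for a scalar $\lambda$; combining the two gives $\alpha=|\lambda|^{2}\alpha$, forcing $|\lambda|=1$ (the possibility $\alpha=0$ again collapsing to $A=0$).

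The step I expect to be the main obstacle is the final bookkeeping: reading the proportionality $\alpha=\lambda(\hat{a}_{(n,m)}+\overline{a}_{0}\varepsilon_{n})$ off coordinate by coordinate and unwinding the banding dictated by the ratio of $m$ to $n$, so as to recover exactly the self-repeating pattern of (\ref{matrix b}) with the scaling factors $1/\lambda^{j}$, just as in the derivation of Theorem~\ref{coro 3.4}. The converse is then immediate: any $A$ of the form (\ref{matrix b}) with $|\lambda|=1$ satisfies the rank-one identity by construction, and imposing the stated vector equation kills the boundary terms as well, so $\Delta(A^{\ast}A-I_{m})=0$ and $A$ is isometric.
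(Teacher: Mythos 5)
Your proposal is correct and follows essentially the same route as the paper: reduce isometry to $\Delta(A^{\ast}A-I_{m})=0$, invoke Lemma~\ref{lemma12}(2), split the displacement into the first-row/first-column part (giving the vector equation) and the rank-one identity, and apply Lemma~\ref{L1} to force $\vert\lambda\vert=1$ and the form (\ref{matrix b}). Your explicit remark that the trivial alternative of Lemma~\ref{L1} collapses to $A=0$ (which is why, unlike the case $m\leq n$, only a single form survives) is left implicit in the paper but is exactly right.
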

\begin{proof}
If $n<m$. $A$ is an isometric matrix if and only if $\Delta (A^{\ast}A-I_{m})=0$.

By Lemma \ref{lemma12}, and since $\alpha\otimes\alpha$ and $(\hat{a}_{(n,m)}+\overline{a}_{0}\varepsilon_{n})\otimes (\hat{a}_{(n,m)}+\overline{a}_{0}\varepsilon_{n})$ are $m\times m$ matrices, where the first row and the first column are zero, then $\Delta (A^{\ast}A-I_{m})=0$ if and only if
\begin{equation}\label{eq.Isometry2}
\alpha\otimes\alpha-(\hat{a}_{(n,m)}+\overline{a}_{0}\varepsilon_{n})\otimes (\hat{a}_{(n,m)}+\overline{a}_{0}\varepsilon_{n})=0
\end{equation}
and
\begin{center}
$A_{0}^{\ast}a+\overline{a}_{0}a_{(n,m)}^{\sharp}+a_{0}\alpha+\dfrac{\vert a_{0}\vert^{2}-\Vert a \Vert^{2} -1}{2}\varepsilon_{0}=0.$
\end{center}
By Lemma \ref{L1}, the equation (\ref{eq.Isometry2}) is satisfied if and only if $\alpha=\lambda (\hat{a}_{(n,m)}+\overline{a}_{0}\varepsilon_{n})$ and $\hat{a}_{(n,m)}+\overline{a}_{0}\varepsilon_{n}=\overline{\lambda} \alpha $, for some $\lambda \in\mathbb{C}$, then $\alpha=\lambda (\overline{\lambda} \alpha )=\vert\lambda\vert^{2}\alpha$, which implies that $\vert\lambda\vert =1$. In which case $A$ has a form (\ref{matrix b}).

\end{proof}
\begin{remark}
Let $A\in\mathcal{T}(n,m)$. If $A$ is an isometric matrix, then $\sum\limits_{i=0}^{n-1}\vert a_{i} \vert^{2}=1$.
\end{remark}
\begin{proof}
Assume that $A$ is isometric. By Lemma \ref{lemma12}, we have 
\begin{equation}\label{eq03}
A_{0}^{\ast}a+\overline{a}_{0}a_{(n,m)}^{\sharp}+a_{0}\alpha +(\vert a_{0}\vert^{2}-1)\varepsilon_{0}=0
\end{equation}
and
\begin{equation}\label{eq04}
S_{m}A_{0}^{\ast}S_{n}^{\ast}a +\overline{a}_{0}a_{(n,m)}^{\sharp}+ a_{0}\alpha=0.
\end{equation}
By subtracting between (\ref{eq03}) and (\ref{eq04}), we get
$$(\Delta A_{0}^{\ast})a+(\vert a_{0}\vert^{2}-1)\varepsilon_{0}=0,$$
then
$$\Vert a \Vert^{2} +\vert a_{0}\vert^{2}-1=0,$$
hence
 $\sum\limits_{i=0}^{n-1}\vert a_{i} \vert^{2}=1$.

\end{proof}
\begin{example}
Let 
$A=\left[\begin{array}{cc}
\dfrac{1}{2}i &   \dfrac{1}{4}-\dfrac{\sqrt{7}}{4}i\\
\dfrac{1}{2} &  \dfrac{1}{2}i \\
\dfrac{\sqrt{7}}{4}+\dfrac{1}{4}i &  \dfrac{1}{2}
\end{array}\right]\in\mathcal{T}(3,2)$,
it is easy to verify that $A$ is an isometric matrix.
\end{example}

\begin{corollary}
Let $H\in\mathcal{H}(n,m)$. Then 
$H$ is an isometric matrix if and only if the asymmetric Toeplitz matrix $P_{n}H$ is isometric.
\end{corollary}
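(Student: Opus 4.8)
The plan is to reduce the whole statement to the single algebraic fact that the reversal matrix $P_{n}$ is unitary, so that left multiplication by $P_{n}$ leaves the Gram matrix unchanged; once that is in place the equivalence is a one-line computation.

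First I would set $A:=P_{n}H$. Reversing the rows of a Hankel matrix turns its constant anti-diagonals into constant diagonals, so $A$ is exactly the asymmetric Toeplitz matrix named in the statement, $A=P_{n}H\in\mathcal{T}(n,m)$, whose isometry we wish to compare with that of $H$. Since $P_{n}^{2}=I_{n}$, left-multiplying by $P_{n}$ recovers $H=P_{n}A$, which is the form used in the earlier corollary.

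Next I would record the key property of $P_{n}$: it is real, symmetric, and an involution, hence $P_{n}^{\ast}=P_{n}^{T}=P_{n}$ and $P_{n}^{\ast}P_{n}=P_{n}^{2}=I_{n}$. Thus $P_{n}$ is unitary, and in particular left multiplication by $P_{n}$ preserves the standard inner product on $\mathbb{C}^{n}$.

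The conclusion is then immediate from $H=P_{n}A$: one computes $H^{\ast}H=(P_{n}A)^{\ast}(P_{n}A)=A^{\ast}P_{n}^{\ast}P_{n}A=A^{\ast}A=(P_{n}H)^{\ast}(P_{n}H)$, so the two Gram matrices are literally equal. Consequently $H^{\ast}H=I_{m}$ if and only if $(P_{n}H)^{\ast}(P_{n}H)=I_{m}$, i.e. $H$ is an isometry precisely when $P_{n}H$ is. The only step demanding any attention is the unitarity identity $P_{n}^{\ast}P_{n}=I_{n}$, and even that is routine from the symmetry and involutivity of $P_{n}$; everything else is formal manipulation of the factorization $H=P_{n}A$, so I do not expect any genuine obstacle here.
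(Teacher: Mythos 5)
Your proof is correct and follows essentially the same route as the paper: both arguments rest on the single identity $H^{\ast}H=H^{\ast}P_{n}^{\ast}P_{n}H=(P_{n}H)^{\ast}(P_{n}H)$ coming from $P_{n}^{\ast}=P_{n}$ and $P_{n}^{2}=I_{n}$. If anything, your version is slightly more complete, since you observe that the two Gram matrices are literally equal (giving both directions at once), whereas the paper writes out only one implication explicitly.
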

\begin{proof}
Suppose that $P_{n}H$ is an isometric matrix. Since $P_{n}^{2}=I_{n}$, then we have

$$H^{\ast}H=H^{\ast}P_{n}^{2}H=H^{\ast}P_{n}^{\ast}P_{n}H=(P_{n}H)^{\ast}(P_{n}H)=I_{m}
,$$

thus $H$ is isometric.
\end{proof}
\section*{Acknowledgments} 
This research work
is supported by the General Direction of Scientific Research and Technological
Development (DGRSDT), Algeria.
\bibliographystyle{amsplain}

\end{document}